\crefname{hypothesis}{Hypothesis}{Hypotheses}
\newcommand{\nmbr}[1]{\mathbb{#1}}
\newcommand{\bc}[1]{{\mathcal #1}}
\newcommand{\alg}[1]{{\mathbf #1}}
\title{Generalized spectrum of second order differential operators
\thanks{Submitted to the editors January 31st, 2020.
\funding{The work of Tom{\'a}{\v s} Gergelits and Zden{\v e}k Strako{\v s} has been supported by the Grant Agency of the Czech Republic under the contract No. 17-04150J, and the work of Bj{\o}rn F. Nielsen was supported by The Research Council of Norway, project number 239070.}
}
}
\author{
Tom{\'a}{\v s} Gergelits\footnotemark[2]\ \footnotemark[3]
\and Bj{\O}rn Fredrik Nielsen\footnotemark[4]
\and Zden{\v e}k Strako{\v s}\footnotemark[3]
}
\begin{document}

\maketitle

\renewcommand{\thefootnote}{\fnsymbol{footnote}}
\footnotetext[2]{Institute of Computer Sciences, Czech Academy of Science, Prague, Czech Republic (\email{gergelits@cs.cas.cz}).}
\footnotetext[3]{Faculty of Mathematics and Physics, Charles University, Prague, Czech Republic (\email{gergelits@karlin.mff.cuni.cz}, \email{strakos@karlin.mff.cuni.cz}).}
\footnotetext[4]{Faculty of Science and Technology, Norwegian University of Life Sciences, NO-1432 {\AA}s, Norway
  (\email{bjorn.f.nielsen@nmbu.no}).}

\renewcommand{\thefootnote}{\arabic{footnote}}

\begin{abstract}
We analyze the spectrum of the operator $\Delta^{-1} [\nabla \cdot (K\nabla u)]$, where $\Delta$ denotes the Laplacian and $K=K(x,y)$ is a symmetric tensor. Our main result shows that this spectrum can be derived from the spectral decomposition $K=Q \Lambda Q^T$, where $Q=Q(x,y)$ is an orthogonal matrix and $\Lambda=\Lambda(x,y)$ is a diagonal matrix. More precisely, provided that $K$ is continuous, the spectrum equals the convex hull of the ranges of the diagonal function entries of $\Lambda$. The involved domain is assumed to be bounded and Lipschitz, and both homogeneous Dirichlet and homogeneous Neumann boundary conditions are considered. We study operators defined on infinite dimensional Sobolev spaces. Our theoretical investigations are illuminated by numerical experiments, using discretized problems. 

The results presented in this paper extend previous analyses which have addressed elliptic differential operators with scalar coefficient functions. Our investigation is motivated by both preconditioning issues (efficient numerical computations) and the need to further develop the spectral theory of second order PDEs (core analysis). 
\end{abstract}

\begin{keywords}
  Second order PDEs, generalized eigenvalues, spectrum, tensors, preconditioning.
\end{keywords}

\begin{AMS}
  65F08, 65F15, 65N12, 35J99
\end{AMS}

\section{Introduction} 
\label{introduction}
For simple domains, the eigenfunctions and eigenvalues of the Laplacian $\Delta$ can be characterized in terms of trigonometric functions. Similar analytic information about the spectrum of general second order differential operators $\nabla \cdot (K \nabla u)$ is not available. On the other hand, in \cite{Ger19,Nie09} the authors show that the generalized eigenvalue problem 
\[
\nabla \cdot (k \nabla u) = \lambda \Delta u \quad \mbox{ for } (x,y) \in \Omega, 
\]
where $k$ is a uniformly positive scalar function, can be analyzed in detail.
More specifically, if $k$ is continuous, then the range 
\[k(\Omega) = \left\{k(x,y),\, (x,y) \in \Omega\right\}\]
of $k$ is contained in the spectrum of the  operator $\Delta^{-1} [\nabla \cdot (k\nabla u)]$. Furthermore, for discretized problems,  assuming that $k$ is bounded and piecewise continuous, the function  values of $k$ over the patches defined by the discretization basis functions provide accurate approximations of the generalized eigenvalues. 

The main purpose of this paper is to extend the results published in \cite{Ger19,Nie09} to second order differential operators which involve a symmetric tensor.  That is, to the generalized eigenvalue problem
\begin{equation}
\begin{split}
\label{eq:eigen}
\nabla \cdot (K \nabla u) &= \lambda \Delta u \quad \mbox{ for } (x,y) \in \Omega, \\
u &= 0 \quad\mbox{ for } (x,y) \in\partial\Omega,
\end{split}
\end{equation}
where the open domain $\Omega \subset \nmbr{R}^2$ is bounded and Lipschitz, and the real valued tensor function $K: \Omega \to \nmbr{R}^{2\times 2}$ is symmetric with its entries being bounded Lebesgue integrable functions and
with the spectral decomposition
\begin{equation}\label{eq:decomposition} 
\begin{aligned}
K(x,y)&= Q(x,y) \Lambda (x,y)  Q^T(x,y), \quad(x,y)\in\Omega,\\
\Lambda (x,y) &= \left[ 
\begin{array}{cc}
\kappa_1(x,y) & 0 \\
0 & \kappa_2(x,y)
\end{array}
\right] , \quad
Q Q^T = Q^T Q = I.
\end{aligned} 
\end{equation}
More specifically, defining the operators $\mathcal{L}, \, \mathcal{A}: H_0^1(\Omega) \mapsto H^{-1}(\Omega)$ as  
\begin{align}\label{eq:L}
& \langle \mathcal{L} \phi, \psi \rangle = \int_{\Omega} \nabla \phi  \cdot  \nabla \psi,  \quad \phi,  \psi \in H_0^1(\Omega), \\
\label{eq:A}
& \langle \mathcal{A} \phi, \psi \rangle = \int_{\Omega} K \nabla \phi  \cdot  \nabla \psi, \quad \phi,  \psi \in H_0^1(\Omega),
\end{align} 
we characterize the spectrum of the preconditioned operator
\begin{equation}\label{eq:operator}
\bc{L}^{-1}\bc{A}: H_0^1(\Omega)  \to H_0^1(\Omega), 
\end{equation}
defined as
\footnote{For operators defined on infinite dimensional Hilbert (Sobolev) spaces, the eigenvalues represent, in general, only a part of the spectrum. Therefore, the generalized eigenvalue problem \cref{eq:eigen} does not determine the whole spectrum \cref{eq:spectrum}.} 
\begin{equation}\label{eq:spectrum}
\mathrm{sp}(\mathcal{L}^{-1} \mathcal{A}) 
\equiv \left\{ \lambda \in \mathbb{C}; \,  \lambda \mathcal{I} - \mathcal{L}^{-1} \mathcal{A} \mbox{ does not have a bounded inverse} \right\}.
\end{equation}

This  paper proves the following  result:
\begin{theorem}[Spectrum of the preconditioned operator]
\label{th:theorem}
Consider an open and bounded Lipschitz domain $\Omega\subset \nmbr{R}^2$.
Assume that the tensor $K$ is symmetric and continuous throughout the closure $\overline{\Omega}$.
Then the spectrum of the operator $\bc{L}^{-1}\bc{A}$, defined in \cref{eq:L}--\cref{eq:spectrum}, equals 
\begin{equation}\label{eq:result}
 \mathrm{sp}(\mathcal{L}^{-1} \mathcal{A})) = \mathrm{Conv}(\kappa_1(\overline{\Omega}) \cup \kappa_2(\overline{\Omega})),
\end{equation}
where
\begin{equation}\label{eq:conv:hull}
 \mathrm{Conv}(\kappa_1(\overline{\Omega}) \cup \kappa_2(\overline{\Omega})) = [\inf_{(x,y)\in\overline{\Omega}}\min_{i=1,2}\ \kappa_i(x,y),\,\sup_{(x,y)\in\overline{\Omega}}\max_{i=1,2}\ \kappa_i(x,y)].
\end{equation}

\end{theorem}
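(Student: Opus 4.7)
The plan is to exploit that $\mathcal{T}:=\mathcal{L}^{-1}\mathcal{A}$ is bounded and self-adjoint on $H_0^1(\Omega)$ equipped with the inner product $\langle\phi,\psi\rangle_{\mathcal{L}}:=\langle\mathcal{L}\phi,\psi\rangle=\int_\Omega\nabla\phi\cdot\nabla\psi$: symmetry of $K$ makes $\langle\mathcal{T}\phi,\psi\rangle_{\mathcal{L}}=\int_\Omega K\nabla\phi\cdot\nabla\psi$ symmetric in $(\phi,\psi)$, and boundedness of the entries of $K$ (which holds because $K$ is continuous on the compact set $\overline{\Omega}$) yields boundedness of $\mathcal{T}$. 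By the Weyl criterion for bounded self-adjoint operators, $\lambda\in\mathrm{sp}(\mathcal{T})$ if and only if there exists $\{u_n\}\subset H_0^1(\Omega)$ with $\|u_n\|_{\mathcal{L}}=1$ and $\|(\mathcal{T}-\lambda\mathcal{I})u_n\|_{\mathcal{L}}\to 0$. The easy inclusion $\mathrm{sp}(\mathcal{T})\subseteq[m,M]$, where $m,M$ are the infimum and supremum appearing in \eqref{eq:conv:hull}, follows from the pointwise bound $m|\xi|^2\le K(x,y)\xi\cdot\xi\le M|\xi|^2$ (implying $m\|u\|_{\mathcal{L}}^2\le\langle\mathcal{T}u,u\rangle_{\mathcal{L}}\le M\|u\|_{\mathcal{L}}^2$) together with the fact that the spectrum of a bounded self-adjoint operator is contained in its closed numerical range. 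Continuity of the $\kappa_i$ on the connected compact set $\overline{\Omega}$ identifies $\mathrm{Conv}(\kappa_1(\overline{\Omega})\cup\kappa_2(\overline{\Omega}))=[m,M]$ directly.

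For the reverse inclusion $[m,M]\subseteq\mathrm{sp}(\mathcal{T})$ I will construct explicit Weyl sequences. Order the eigenvalues so that $\kappa_1(x,y)\le\kappa_2(x,y)$. For each $\lambda\in[m,M]$ I first argue that there exists $(x_0,y_0)\in\overline{\Omega}$ with $\kappa_1(x_0,y_0)\le\lambda\le\kappa_2(x_0,y_0)$: otherwise the two open sets $\{\kappa_1>\lambda\}$ and $\{\kappa_2<\lambda\}$ would cover $\overline{\Omega}$ disjointly, forcing one of them to be empty by connectedness, which in turn contradicts $\lambda\in[m,M]$. Let $v_1,v_2\in\mathbb{R}^2$ be orthonormal eigenvectors of $K(x_0,y_0)$ associated with $\kappa_1(x_0,y_0),\kappa_2(x_0,y_0)$, and pick $N_1,N_2>0$ with $N_1^2(\lambda-\kappa_1(x_0,y_0))=N_2^2(\kappa_2(x_0,y_0)-\lambda)$. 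Then the wave vector $V:=N_1v_1+N_2v_2$ satisfies the crucial identity
\[
V\cdot(K(x_0,y_0)-\lambda I)V=N_1^2(\kappa_1(x_0,y_0)-\lambda)+N_2^2(\kappa_2(x_0,y_0)-\lambda)=0.
\]
With a smooth bump $\eta\in C_c^\infty(\mathbb{R}^2)$ supported in the unit disk, and scales $\epsilon_n\downarrow 0$ and $n\to\infty$ tuned below, I consider the localized plane-wave candidate
\[
u_n(x,y)=\eta\!\left(\frac{x-x_0}{\epsilon_n},\frac{y-y_0}{\epsilon_n}\right)\sin\!\bigl(nV\cdot(x-x_0,y-y_0)\bigr),
\]
for which $\|u_n\|_{\mathcal{L}}=\|\nabla u_n\|_{L^2}\sim n\epsilon_n$.

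It remains to drive the residual to zero in the normalized ratio, using the duality formula $\|(\mathcal{T}-\lambda\mathcal{I})u_n\|_{\mathcal{L}}=\sup_{\psi\in H_0^1}|\int_\Omega(K-\lambda I)\nabla u_n\cdot\nabla\psi|/\|\nabla\psi\|_{L^2}$. Splitting $K-\lambda I=(K-K(x_0,y_0))+(K(x_0,y_0)-\lambda I)$, the variable-coefficient summand contributes at most $\omega_K(\epsilon_n)\|\nabla u_n\|_{L^2}\|\nabla\psi\|_{L^2}$ by the pointwise bound $|K(x,y)-K(x_0,y_0)|\le\omega_K(\epsilon_n)$ on $\mathrm{supp}\,u_n$, where $\omega_K$ is the modulus of continuity of $K$; this is negligible in the normalized ratio as $\epsilon_n\to 0$. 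For the constant-coefficient summand, set $A:=(K(x_0,y_0)-\lambda I)V$, so that the key identity reads $A\cdot V=0$. Integrating by parts the $A$-derivative in $n\int(A\cdot\nabla\psi)\eta_{\epsilon_n}\cos(nV\cdot\xi)$, the would-be leading $O(n^2)$ contribution $n\eta_{\epsilon_n}A\cdot\nabla\cos(nV\cdot\xi)=-n^2(A\cdot V)\eta_{\epsilon_n}\sin$ vanishes identically, leaving only the cutoff-derivative term $-n\int\psi(A\cdot\nabla\eta_{\epsilon_n})\cos$. A second integration by parts against the oscillation, together with the two-dimensional Sobolev embedding $H^1\hookrightarrow L^p$ used to estimate $\|\psi\|_{L^2(B_{\epsilon_n})}\le C_\delta\epsilon_n^{1-\delta}\|\nabla\psi\|_{L^2}$ for arbitrarily small $\delta>0$, reduces the residual to a fraction of order $1/(n\epsilon_n^{1+\delta})$ of $\|u_n\|_{\mathcal{L}}$; tuning, for example, $\epsilon_n=n^{-\alpha}$ with $\alpha<1/(1+\delta)$ then completes the Weyl sequence. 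The chief obstacle, which is the genuinely new difficulty compared with the scalar case treated in \cite{Ger19,Nie09}, is the gap regime $\kappa_1(x_0,y_0)<\lambda<\kappa_2(x_0,y_0)$: there $A\ne 0$, the naive Cauchy--Schwarz bound $\|(\mathcal{T}-\lambda\mathcal{I})u_n\|_{\mathcal{L}}\le\|(K-\lambda I)\nabla u_n\|_{L^2}$ cannot be driven to zero, and only the orthogonality $A\perp V$ together with the oscillation-induced cancellation captured in the weaker $H^{-1}$ norm makes the Weyl condition hold.
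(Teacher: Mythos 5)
Your proposal is correct in substance, but it reaches the hard inclusion $[m,M]\subseteq\mathrm{sp}(\mathcal{L}^{-1}\mathcal{A})$ by a genuinely different route than the paper. The paper splits the work into three pieces: (i) an explicit anisotropic tent function $v_r$ supported on a thin rectangle $R_r=[x_0-r^2,x_0+r^2]\times[y_0-r,y_0+r]$, which shows that each value $\kappa_i(x_0,y_0)$ at a point of continuity lies in the spectrum; (ii) the observation that for $\lambda$ strictly inside a gap between the ranges, $(\lambda-\overline{k}_1)v_{xx}+(\lambda-\overline{k}_2)v_{yy}=0$ is a wave equation whose Dirichlet problem on a small rectangle of aspect ratio $c^2=(\lambda-\overline{k}_1)/(\overline{k}_2-\lambda)$ has nontrivial solutions, so that gap values are genuine eigenvalues whenever $K$ is constant on a patch; and (iii) a Neumann-series perturbation argument reducing the case of $K$ continuous at a single point to the locally constant case. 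Your construction collapses (i)--(iii) into a single Weyl-sequence argument: the cut-off plane wave oscillating along a null direction $V$ of the indefinite form $K(x_0,y_0)-\lambda I$ is an approximate eigenfunction, and the variable coefficients are absorbed directly through the modulus of continuity. The underlying mechanism is the same --- your $V$ is precisely a characteristic direction of the paper's wave operator, and the paper's product of sines is a superposition of your plane waves --- but your packaging dispenses with both the locally-constant lemma and the perturbation appendix, at the price of the oscillatory estimate (two integrations by parts plus the two-dimensional $H^1\hookrightarrow L^p$ embedding). You correctly identify that this estimate is indispensable: without the cancellation $A\cdot V=0$ and the second integration by parts, the normalized residual is only $O(\epsilon_n^{-\delta})$ and does not tend to zero. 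What the paper's route buys in exchange is slightly more information: gap values are true eigenvalues for locally constant $K$, and its gap lemmas require continuity at only one point with mere boundedness and integrability elsewhere.

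Two small repairs are needed. First, your connectedness argument produces $(x_0,y_0)\in\overline{\Omega}$, possibly on $\partial\Omega$, where the ball supporting $u_n$ leaves $\Omega$; run the same argument over the open connected set $\Omega$, or note that $\bigcup_{(x,y)\in\Omega}[\kappa_1(x,y),\kappa_2(x,y)]$ is an interval dense in $[m,M]$ and invoke the closedness of the spectrum, exactly as the paper does when passing from $\Omega$ to $\overline{\Omega}$. Second, the normalization $\|u_n\|_{\mathcal{L}}\sim n\epsilon_n$ presupposes $n\epsilon_n\to\infty$ (otherwise the cutoff-gradient term $\epsilon_n^{-1}(\nabla\eta)(\cdot/\epsilon_n)\sin$ dominates); this is consistent with your tuning $\epsilon_n=n^{-\alpha}$, $\alpha<1/(1+\delta)<1$, but should be stated explicitly.
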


Note that this theorem extends the results in \cite{Nie09} in several ways. It holds for second order differential operators with definite, indefinite and semidefinite tensors. 
Moreover, instead of the inclusion proved for the scalar case in \cite{Nie09}, it shows that the spectrum actually equals the interval \cref{eq:conv:hull} determined by  $K(x,y)$.

Our theoretical study addresses operators defined on infinite dimensional Sobolev spaces. Numerical experiments suggest that even stronger properties, analogous to the scalar case analyzed in \cite{Ger19}, hold for discretized problems. 


Our theoretical results can be illustrated by the following experiment.
We consider three test problems \cref{eq:eigen}  with diagonal tensors \cref{eq:decomposition} (i.e., $Q=I$) defined on the domain $\Omega\equiv(0,1)\times(0,1)$, where 
\begin{equation}\label{eq:intro:exp}
\begin{aligned}
(P1):\quad \kappa_1(x,y) &= 1,\qquad\qquad\qquad\quad \kappa_2(x,y) = 10,\\
(P2):\quad \kappa_1(x,y) &= 1+0.5(x+y),\ \quad \kappa_2(x,y) = 10-0.5(x+y), \\
(P3):\quad \kappa_1(x,y) &= 1+3(x+y),\quad \quad \kappa_2(x,y) = 10-2(x+y),
\end{aligned}
\end{equation}
for $(x,y)\in\Omega$. We discretize the problem \cref{eq:eigen} using a uniform triangular mesh with piecewise linear discretization basis functions; see \cite{Ger19} for the scalar case analogy. \Cref{fig:intro} presents the eigenvalues of the resulting discrete generalized eigenvalue problem of size $381$. We observe that the spectrum of the discretized problem covers not only the union of the ranges $\kappa_1(\overline{\Omega})\cup\kappa_2(\overline{\Omega})$, but in the case that $\kappa_1(\overline{\Omega})$ and $\kappa_2(\overline{\Omega})$ do not overlap, it surprisingly covers the whole interval \cref{eq:conv:hull}.
\begin{figure}[ht]
\centering
\includegraphics[width = 0.7\linewidth]{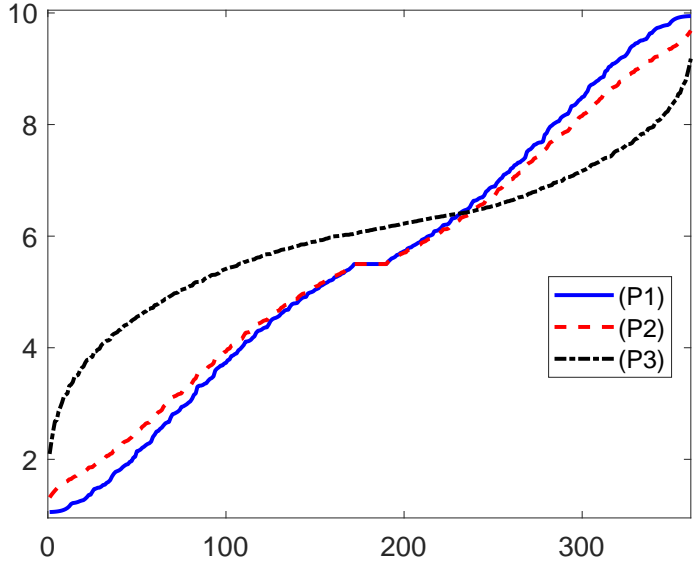}
\caption{Eigenvalues of the  discretized problems (P1)--(P3), defined in \cref{eq:intro:exp}, spread over the entire interval $[1,10]$, while the ranges of entries of the diagonal tensor are the following:
(P1): $\kappa_1(\overline{\Omega}) = 1,\ 		\kappa_2(\overline{\Omega}) = 10$;
(P2): $\kappa_1(\overline{\Omega}) = [1,2],\  \kappa_2(\overline{\Omega}) = [9,10]$;
(P3): $\kappa_1(\overline{\Omega}) = [1,7],\  \kappa_2(\overline{\Omega}) = [6,10]$. Horizontal axis: the indices of the increasingly ordered eigenvalues. Vertical axis: the size of the eigenvalues.}
\label{fig:intro}
\end{figure}

Since $\langle \mathcal{A} u, v \rangle =  \langle \mathcal{A} v, u \rangle$ for all $u, \, v \in H_0^1(\Omega)$, which is a consequence of the symmetry of the tensor $K$, the preconditioned operator \cref{eq:operator} is self-adjoint with respect to the inner product associated with the Laplacian:
\begin{align}\label{eq:inner}
(u,v)_{\bc{L}} &\equiv \langle \mathcal{L} u, v \rangle =  \int_{\Omega} \nabla u  \cdot  \nabla v, \quad u,v\in H_0^1(\Omega), \\
 \label{eq:self:L}
(\bc{L}^{-1}\bc{A}u,v)_{\bc{L}} &= \langle \mathcal{A} u, v \rangle =  \langle \mathcal{A} v, u \rangle = (\bc{L}^{-1}\bc{A}v,u)_{\bc{L}}.
\end{align}
Consequently, $\mathrm{sp}(\bc{L}^{-1}\bc{A})\subset \nmbr{R}$.
The inner product \cref{eq:inner} defines the norm 
\[\|u\|^2_{\bc{L}}\equiv (u,u)_{\bc{L}} = \langle \mathcal{L} u, u \rangle =  \int_{\Omega} \|\nabla u  \|^2 = \int_{\Omega} \| u_x \|_2^2 + \| u_y \|_2^2, \quad u\in H_0^1(\Omega) \]
used in the proofs below.

The convergence behavior of the Conjugate Gradient (CG) method is determined by the spectral distribution functions of the involved linear systems; see, e.g.,  \cite{Gre89,LSB13}. Hence, the analysis presented in this paper can be employed to better understand the performance of CG when the inverse of the Laplacian (or some variant incorporating it) is applied as preconditioner to solve discretized second order elliptic PDEs; see \cite{Ger19} for a discussion of this topic. Also, constant-coefficient-preconditioners may be of particular interest when the Isogeometric Analysis (IgA) approach is employed to discretize both PDEs and the involved computational domains in terms of B-splines
\cite{Hof17,Hof15,San16}.  

This paper is organized in the following way. For clarity of exposition,  we restrict ourselves in \cref{sec:diag,sec:diag:ii} to problems with diagonal tensors. In \cref{sec:diag}  we present auxiliary lemmas generalizing, step by step, the results in \cite{Nie09}. \Cref{sec:diag:ii} contains the proof of the main result for problems with diagonal tensors, and in \cref{sec:symm} we generalize the lemmas from previous sections to nondiagonal symmetric tensors and give the proof of the main result \cref{th:theorem}. In \cref{sec:neumann} we comment on problems with homogenous Neumann boundary conditions. The numerical experiments  in \cref{sec:exp} illustrate the results of the analysis, and the text closes with 
a brief discussion of some open problems in \cref{sec:remarks}.


\section{Auxiliary results}\label{sec:diag}
We will start with considering diagonal tensors, i.e.,
\begin{equation}\label{eq:diag:tensor}
K(x,y)=\left[ 
\begin{array}{cc}
\kappa_1(x,y) & 0 \\
0 & \kappa_2(x,y)
\end{array}
\right].
\end{equation}
This will allow us to explain with full clarity the main difference between the scalar case studied in \cite{Ger19,Nie09} and the tensor case analyzed in this paper.

\subsection{Function values at points of continuity belong to the spectrum}
The following lemma generalizes statement (a) in Theorem~3.1 in \cite{Nie09}.
\begin{lemma}\label{th:function:values}
Assume that $K$ is a diagonal tensor, where the entries $\kappa_1$ and $\kappa_2$ are bounded and Lebesgue integrable functions on $\Omega$. 
The following holds for $i=1, \,2$: 
If $\kappa_i$ is continuous at $(x_0,y_0) \in \Omega$, then  $$\kappa_i(x_0,y_0) \in \mathrm{sp}(\mathcal{L}^{-1} \mathcal{A}).$$
\end{lemma}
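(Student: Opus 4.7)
The plan is to prove $\kappa_i(x_0,y_0) \in \mathrm{sp}(\mathcal{L}^{-1}\mathcal{A})$ by constructing a Weyl-type (approximate eigenfunction) sequence $\{u_n\}\subset H_0^1(\Omega)$ with $\|u_n\|_{\mathcal{L}}=1$ and
\[
\bigl\|\bigl(\mathcal{L}^{-1}\mathcal{A}-\kappa_i(x_0,y_0)\mathcal{I}\bigr)u_n\bigr\|_{\mathcal{L}} \longrightarrow 0.
\]
Since the preconditioned operator is self-adjoint with respect to $(\cdot,\cdot)_{\mathcal{L}}$ (see \cref{eq:self:L}), such an approximate spectral sequence at a real value $\kappa_i(x_0,y_0)$ precludes a bounded inverse of $\kappa_i(x_0,y_0)\mathcal{I}-\mathcal{L}^{-1}\mathcal{A}$ and hence places $\kappa_i(x_0,y_0)$ in the spectrum \cref{eq:spectrum}.

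The reduction step that makes the construction effective is the following identity, valid for every $v\in H_0^1(\Omega)$:
\[
\bigl((\mathcal{L}^{-1}\mathcal{A}-\kappa_i(x_0,y_0)\mathcal{I})u,v\bigr)_{\mathcal{L}} = \int_{\Omega}\bigl(\kappa_1-\kappa_i(x_0,y_0)\bigr)u_x v_x + \bigl(\kappa_2-\kappa_i(x_0,y_0)\bigr)u_y v_y.
\]
Duality and Cauchy--Schwarz then give
\[
\bigl\|(\mathcal{L}^{-1}\mathcal{A}-\kappa_i(x_0,y_0)\mathcal{I})u\bigr\|_{\mathcal{L}} \le \bigl\|(\kappa_1-\kappa_i(x_0,y_0))u_x\bigr\|_{L^2} + \bigl\|(\kappa_2-\kappa_i(x_0,y_0))u_y\bigr\|_{L^2}.
\]
For $u$ supported in a ball $B_r(x_0,y_0)\subset\Omega$ chosen small enough that $|\kappa_i-\kappa_i(x_0,y_0)|<\varepsilon$ on $B_r$ (possible by continuity of $\kappa_i$ at $(x_0,y_0)$), the $i$-th term is bounded by $\varepsilon\|u\|_{\mathcal{L}}$. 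The other term is only controlled by the boundedness of the remaining coefficient, yielding at most $M\|u_j\|_{L^2}$ with $j\neq i$ and $M=\|\kappa_j\|_{\infty}+|\kappa_i(x_0,y_0)|$. The key idea, which is the new ingredient compared to the scalar case in \cite{Nie09}, is therefore to choose $u_n$ whose gradient is concentrated in the $i$-th coordinate direction, so that the off-direction partial derivative is negligible in $L^2$ after normalization.

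Concretely, fix a smooth cut-off $\phi\in C_c^\infty(B_r(x_0,y_0))$ with $\phi\not\equiv 0$, and for $i=1$ set
\[
\widetilde{u}_n(x,y) \equiv \phi(x,y)\sin\!\bigl(n(x-x_0)\bigr), \qquad u_n \equiv \widetilde{u}_n/\|\widetilde{u}_n\|_{\mathcal{L}},
\]
with the analogous oscillatory factor in $y$ for $i=2$. A direct expansion shows $\|(\widetilde{u}_n)_x\|_{L^2}^2 = \tfrac{1}{2}n^2\|\phi\|_{L^2}^2 + O(n)$ while $\|(\widetilde{u}_n)_y\|_{L^2}^2 = O(1)$, so $\|\widetilde{u}_n\|_{\mathcal{L}}\sim n\|\phi\|_{L^2}/\sqrt{2}$ and consequently $\|(u_n)_y\|_{L^2} = O(1/n)\to 0$, whereas $\|(u_n)_x\|_{L^2}\le 1$. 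Plugging this into the bound above yields
\[
\bigl\|(\mathcal{L}^{-1}\mathcal{A}-\kappa_1(x_0,y_0)\mathcal{I})u_n\bigr\|_{\mathcal{L}} \le \varepsilon + M\,\|(u_n)_y\|_{L^2},
\]
and letting $n\to\infty$ and then $\varepsilon\to 0$ produces the required Weyl sequence. The symmetric construction with oscillation in $y$ handles $i=2$.

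The main obstacle, and the only step that does not mirror the scalar argument of \cite{Nie09}, is dealing with the contribution of the "wrong" diagonal entry $\kappa_j$ ($j\neq i$). A bump function alone is insufficient because it spreads gradient energy equally in both coordinate directions, so the $\kappa_j$-term would contribute a fixed $O(1)$ error. Multiplying the bump by an increasingly oscillatory factor in the $i$-th variable is the device that makes $\|(u_n)_x\|_{L^2}$ and $\|(u_n)_y\|_{L^2}$ grow at different rates and thus decouples the two directions, at the price of not obtaining a convergent subsequence in $H_0^1(\Omega)$; but convergence of $u_n$ is not needed, only the Weyl property together with the self-adjointness noted in \cref{eq:self:L}.
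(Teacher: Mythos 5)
Your proof is correct and follows the same overall strategy as the paper: build a normalized approximate eigenfunction (Weyl) sequence localized near the point of continuity whose gradient energy is concentrated in the $i$-th coordinate direction, so that the term involving the ``wrong'' coefficient $\kappa_j$, $j\neq i$, is controlled by the smallness of the off-direction partial derivative rather than by continuity. You correctly identify this directional decoupling as the new ingredient relative to the scalar case. The concrete realization differs, though: the paper uses a one-parameter family of piecewise-linear tent functions $v_r$ supported on increasingly eccentric rectangles $[x_0-r^2,x_0+r^2]\times[y_0-r,y_0+r]$, so that shrinking the support and concentrating the gradient are coupled into the single parameter $r$ (with the explicit bounds $\|(v_r)_x\|_{L^2}^2\to 4$, $\|(v_r)_y\|_{L^2}^2\le 4r$ worked out in \cref{sec:apA}); you instead fix a small ball via continuity and modulate a smooth bump by a high-frequency oscillation $\sin(n(x-x_0))$, decoupling the locality parameter $r$ from the anisotropy parameter $n$, at the cost of a routine diagonal extraction to send $n\to\infty$ and $\varepsilon\to 0$ together. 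Your oscillatory construction is the more classical device for Weyl sequences and rotates trivially (replace the phase by $n\,\mathbf{q}\cdot\mathbf{w}$) for the nondiagonal case of \cref{th:function:values:ii}, whereas the paper's tent functions yield fully explicit elementary estimates. Two minor remarks: the appeal to self-adjointness is unnecessary for this direction of the argument, since $\|u_n\|_{\mathcal{L}}=1$ together with $\|(\lambda\mathcal{I}-\mathcal{L}^{-1}\mathcal{A})u_n\|_{\mathcal{L}}\to 0$ already precludes a bounded inverse; and your duality step $\|(\mathcal{L}^{-1}\mathcal{A}-\lambda\mathcal{I})u\|_{\mathcal{L}}=\sup_{\|v\|_{\mathcal{L}}=1}|((\mathcal{L}^{-1}\mathcal{A}-\lambda\mathcal{I})u,v)_{\mathcal{L}}|$ is sound and is essentially how the paper bounds $\|u_r\|_{\mathcal{L}}^2$ by testing against $u_r$ itself.
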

\begin{proof}
Assume that $\kappa_1$ is continuous at $(x_0,y_0)$ and let $$\lambda\equiv\kappa_1(x_0,y_0).$$
We will construct parametrized functions $v_r$ and $u_r = (\lambda \bc{I} - \bc{L}^{-1}\bc{A})v_r$ such that 
\begin{equation}
\label{BFnew1}
\lim_{r\to0}\|v_r\|_{\bc{L}}\neq 0\quad \mbox{and}\quad \lim_{r\to0}\|u_r\|_{\bc{L}}=0,
\end{equation}
which is not possible if $\lambda \bc{I} - \bc{L}^{-1}\bc{A}$ has a bounded inverse: $v_r = (\lambda \bc{I} - \bc{L}^{-1}\bc{A})^{-1}u_r$ and $\lim_{r\to0}\|u_r\|_{\bc{L}}=0$ imply that $\lim_{r\to0}\|v_r\|_{\bc{L}}=0$. (The norm $\|\cdot\|_{\bc{L}}$ is the norm induced by the inner product \cref{eq:inner}).

The functions $v_r$ can be constructed, e.g., in the following way. Consider, for a sufficiently small $r>0$, the following closed neighborhood of the point $(x_0,y_0)$:
\begin{equation}\label{eq:domain:r}
R_r = [x_0-r^2,x_0+r^2] \times [y_0-r,y_0+r]\subset \Omega. 
\end{equation}
For $(x,y)\in R_r$ define
\begin{equation}
\label{B0}
v_r(x,y) = 
\sqrt{r} \min \left\{ 1-\tfrac{|x-x_0|}{r^2},  \, \tfrac{1}{r} -\tfrac{|y-y_0|}{r^2}  \right\}, 
\end{equation}
and $v_r(x,y) = 0$ otherwise.
It can be verified that {(see \cref{sec:apA})}
\begin{equation}\label{B1}
\begin{aligned}
4-4r\leq \| (v_r)_x \|_{L^2(\Omega)}^2 &\leq 4, \\
\| (v_r)_y \|_{L^2(\Omega)}^2 &\leq 4r.
\end{aligned}
\end{equation}
Consequently,
\begin{equation}
\label{B2}
\lim_{r \rightarrow 0} {\| v_r \|_{\bc{L}} = \lim_{r \rightarrow 0} \left(\|(v_r)_x\|^2_{L^2(\Omega)}+\|(v_r)_y\|^2_{L^2(\Omega)}\right)^{1/2} }= 2.
\end{equation}
Considering
\begin{equation}
\label{B3}
u_r = (\lambda \mathcal{I} - \mathcal{L}^{-1} \mathcal{A}) v_r, \quad\mbox{i.e.,}\quad  \bc{L} u_r = ({\lambda} \bc{L} - \bc{A}) v_r, 
\end{equation}
we get
\begin{align*}
\| u_r \|^2_{\bc{L}} = \langle \mathcal{L} u_r, u_r \rangle
	&= \langle (\lambda \mathcal{L} - \mathcal{A}) v_r  , u_r \rangle \\
	&= \int_{\Omega} (\lambda I - K) \nabla v_r  \cdot  \nabla u_r \\
	& \leq \left( \int_{\Omega} |(\lambda I - K) \nabla v_r|^2 \right)^{1/2} \| u_r \|_{\bc{L}}. 
\end{align*}
Using that $\mathrm{supp} (v_r) = R_r$ and \cref{B1},
\begin{align*}
\nonumber
\|  u_r \|^2_{\bc{L}} &\leq \| (\lambda-\kappa_1) (v_r)_x \|_{L^2(\Omega)}^2+\| (\lambda-\kappa_2) (v_r)_y \|_{L^2(\Omega)}^2 \\
& \leq 4\sup_{(x,y) \in R_r} |\kappa_1(x_0,y_0)-\kappa_1(x,y) |^2  + 4r(\| \kappa_1 \|_{L^{\infty}(\Omega)}+\| \kappa_2 \|_{L^{\infty}(\Omega)})^2,
\end{align*}
and from the continuity of $\kappa_1(x,y)$ at $(x_0,y_0)$, 
\begin{equation}
\label{B4}
\lim_{r \rightarrow 0} \| u_r \|_{\bc{L}} = 0. 
\end{equation}

From \cref{B2} and \cref{B4} we conclude that we can construct functions $v_r$ and $u_r= (\lambda \bc{I} - \bc{L}^{-1}\bc{A})v_r$ such that \cref{BFnew1} holds. We conclude that $\kappa_1(x_0,y_0) \bc{I} - \bc{L}^{-1}\bc{A}$ can not have a bounded inverse.

The proof that $\kappa_2(x_0,y_0)$ belongs to the spectrum if $\kappa_2$ is continuous at $(x_0,y_0)$ is trivially analogous.
\end{proof}

If $\kappa_i\in \bc{C}(\Omega)$, $i = 1,2$, then \cref{th:function:values} gives a diagonal-tensor-case analogy of Theorem 3.1, statement (b), in \cite{Nie09}. 
As is shown next, in the tensor case the spectrum of the preconditioned operator $\bc{L}^{-1}\bc{A}$ can, however, also contain numbers that do not belong to any of the individual ranges of the functions $\kappa_1$ and $\kappa_2$.

\subsection{Disjoint ranges extend the spectrum}
An unexpected case occurs when the ranges of $\kappa_1$ and $\kappa_2$ are disjoint,
\[\kappa_1(\overline{\Omega}) \cap \kappa_2(\overline{\Omega}) = \emptyset.\]
We begin by presenting the following facts that will be used in the proofs. 

\subsubsection{Dirichlet problem for the wave equation}\label{sec:dir}
Note that for any integer $n$,
\begin{equation}
\label{B5.01}
\phi(x,y) = \sin(n \pi c l^{-1} (y-y_0)) \sin(n \pi  l^{-1} (x-x_0))
\end{equation}
solves the following Dirichlet problem for the wave equation:  
\begin{equation}
\label{B5.1}
\begin{split}
\phi_{yy} &= c^2 \phi_{xx} \quad \mbox{in } \Sigma_l, \\
\phi & = 0 \quad \mbox{on } \partial \Sigma_l, 
\end{split}
\end{equation}
where $l$ is a positive constant which determines the size of the solution domain 
\[
\Sigma_l = (x_0,x_0 + l) \times (y_0,y_0+l/c),
\]
and $c>0$ is arbitrary.
We conclude that this Dirichlet problem has infinitely many nontrivial solutions. 
It is also clear that  $\Sigma_l$ can be made as small as needed by choosing $l>0$ sufficiently small. 

\subsubsection{Tensors  constant on an open subdomain} 
Consider the generalized eigenvalue problem \cref{eq:eigen}
with a diagonal tensor $K(x,y)$ \cref{eq:diag:tensor} that is constant on an open subdomain $S \subset \Omega$. Then we get the following lemma.

\begin{lemma}
\label{th:constant}
Consider a diagonal tensor \cref{eq:diag:tensor}, where the bounded and Lebesgue integrable functions $\kappa_i$, $i = 1,2$  are constant on an open subdomain $S\subset\Omega$. Assuming that
\begin{equation}
\label{B6.1}
\sup_{(x,y)\in\Omega} \kappa_1(x,y) < \inf_{(x,y)\in\Omega}\kappa_2(x,y),
\end{equation}
then the following closed interval belongs to the spectrum of $\bc{L}^{-1}\bc{A}$,
\begin{equation}\label{eq:constant}
[\sup_{(x,y)\in\Omega} \kappa_1(x,y), \, \inf_{(x,y)\in\Omega} \kappa_2(x,y)]  \subset \mathrm{sp}(\mathcal{L}^{-1} \mathcal{A}).
\end{equation}
The analogous statement obviously holds with interchanging the roles of $\kappa_1$ and $\kappa_2$ in \cref{B6.1} and \cref{eq:constant}.
\end{lemma}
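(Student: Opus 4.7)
The plan is to construct, for every $\lambda$ in the open interval $(\sup_{\Omega}\kappa_1,\,\inf_{\Omega}\kappa_2)$, a Weyl-type sequence $v_n \in H_0^1(\Omega)$ supported inside $S$, built from the trigonometric wave-equation solutions \cref{B5.01} recalled in \cref{sec:dir}; the claimed closed interval \cref{eq:constant} will then follow because the spectrum is closed. Let $c_1, c_2$ denote the (constant) values of $\kappa_1, \kappa_2$ on $S$; the hypotheses force $c_1 \leq \sup_{\Omega}\kappa_1 < \inf_{\Omega}\kappa_2 \leq c_2$, so $\lambda \in (c_1, c_2)$ strictly for any $\lambda$ in the interval of interest, and I may set $c := \sqrt{(\lambda-c_1)/(c_2-\lambda)}>0$. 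Because $S$ is open, a rectangle $\Sigma_l = (x_0, x_0+l) \times (y_0, y_0+l/c) \subset S$ exists. For $n \in \mathbb{N}$, I take $\phi_n$ from \cref{B5.01} with this $c$ and define $v_n = \phi_n$ on $\Sigma_l$, $v_n = 0$ on $\Omega \setminus \Sigma_l$; since $\phi_n$ vanishes on $\partial\Sigma_l$, the extension lies in $H_0^1(\Omega)$.

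Following the template of the proof of \cref{th:function:values}, I would verify both limits in \cref{BFnew1} (after a harmless normalization). Orthogonality of sines yields
\[\|v_n\|_{\mathcal{L}}^2 = \int_{\Sigma_l} \bigl(\phi_{n,x}^2 + \phi_{n,y}^2\bigr) = \tfrac{n^2\pi^2}{4}\bigl(c + c^{-1}\bigr),\]
so $\|v_n\|_{\mathcal{L}} \to \infty$. With $u_n = (\lambda\mathcal{I} - \mathcal{L}^{-1}\mathcal{A})v_n$, formula \cref{B3} gives $\|u_n\|_{\mathcal{L}}^2 = \int_{\Sigma_l} (\lambda I - K)\nabla\phi_n \cdot \nabla u_n$. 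Since $K \equiv \diag(c_1, c_2)$ on $\Sigma_l$, Green's identity expresses this as a volume integral against $-u_n\bigl[(\lambda-c_1)\phi_{n,xx} + (\lambda-c_2)\phi_{n,yy}\bigr]$ plus a boundary flux on $\partial\Sigma_l$. The volume integrand vanishes identically: the defining relation for $c$ was chosen precisely so that $\phi_n$ satisfies the wave equation \cref{B5.1}, which reads $(\lambda-c_1)\phi_{n,xx} + (\lambda-c_2)\phi_{n,yy} = 0$. Only the boundary term survives:
\[\|u_n\|_{\mathcal{L}}^2 = \int_{\partial\Sigma_l} u_n\, g_n \, ds, \qquad g_n := (\lambda-c_1)\phi_{n,x}\nu_x + (\lambda-c_2)\phi_{n,y}\nu_y,\]
with $\nu$ the outward unit normal to $\Sigma_l$.

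The main obstacle is estimating this boundary integral sharply enough to beat $\|v_n\|_{\mathcal{L}}$. On each of the four sides of $\partial\Sigma_l$, $g_n$ is a pure sinusoid of frequency $n$ and amplitude $O(n)$. A naive Cauchy--Schwarz bound using $L^2(\partial\Sigma_l)$ combined with the trace and Poincar\'e inequalities yields only $\|u_n\|_{\mathcal{L}} \leq C n$, which matches $\|v_n\|_{\mathcal{L}}$ and produces no useful decay. To extract the missing factor $n^{-1/2}$, one must exploit that the traces of $u_n$ on $\partial\Sigma_l$ actually belong to $H^{1/2}$: combining the standard trace inequality $\|u_n|_{\partial\Sigma_l}\|_{H^{1/2}} \leq C\|u_n\|_{\mathcal{L}}$ with the Sobolev/Fourier decay $\|\sin(n\pi\,\cdot/L)\|_{H^{-1/2}(0,L)} \lesssim n^{-1/2}$ upgrades the estimate to $\bigl|\int_{\partial\Sigma_l} u_n g_n\,ds\bigr| \leq C n^{1/2}\|u_n\|_{\mathcal{L}}$, whence $\|u_n\|_{\mathcal{L}} \leq C n^{1/2}$.

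Consequently $\|u_n\|_{\mathcal{L}}/\|v_n\|_{\mathcal{L}} \leq C n^{-1/2} \to 0$ while $\|v_n\|_{\mathcal{L}} \to \infty$; exactly as in \cref{th:function:values}, this rules out a bounded inverse for $\lambda\mathcal{I} - \mathcal{L}^{-1}\mathcal{A}$ and places $\lambda$ in $\mathrm{sp}(\mathcal{L}^{-1}\mathcal{A})$. Since every $\lambda \in (\sup_{\Omega}\kappa_1, \inf_{\Omega}\kappa_2)$ lies in the spectrum and the spectrum is closed, the closed interval \cref{eq:constant} is contained in $\mathrm{sp}(\mathcal{L}^{-1}\mathcal{A})$. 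The statement obtained by interchanging the roles of $\kappa_1$ and $\kappa_2$ is proved identically, using a rectangle of aspect ratio $1/c$ rotated by $90^\circ$.
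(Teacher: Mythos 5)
Your proposal is correct in substance, but it takes a genuinely different --- and in one respect more careful --- route than the paper. The paper fixes a \emph{single} nontrivial solution $\phi$ of the wave equation \cref{B5.1}, extends it by zero to all of $\Omega$, and asserts that the resulting $v\in H_0^1(\Omega)$ solves the weak form of \cref{eq:eigen}, so that every $\lambda$ strictly between the two constant values of $\kappa_1$ and $\kappa_2$ on $S$ is an actual eigenvalue; the endpoints are then supplied by \cref{th:function:values}. You instead treat the whole family $\{\phi_n\}$ as a Weyl sequence and estimate the residual, which after Green's identity reduces to the conormal flux $\int_{\partial\Sigma_l} u_n\,(\lambda I-K)\nabla\phi_n\cdot\nu\,ds$. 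This is not a cosmetic difference: testing the zero-extension against a $\psi\in H_0^1(\Omega)$ that does not vanish on $\partial\Sigma_l$ produces exactly this boundary term, and since only $\phi_n$ itself, not its conormal derivative, vanishes on $\partial\Sigma_l$, the extension is in general \emph{not} an exact weak eigenfunction --- the boundary integral you single out as ``the main obstacle'' is precisely what the paper's one-line justification passes over. Your $H^{1/2}$--$H^{-1/2}$ duality estimate (trace of $u_n$ in $H^{1/2}(\partial\Sigma_l)$ paired against an amplitude-$O(n)$, frequency-$n$ oscillation whose $H^{-1/2}$ norm is $O(n^{1/2})$) is a legitimate way to close the argument and yields $\|u_n\|_{\mathcal{L}}/\|v_n\|_{\mathcal{L}}=O(n^{-1/2})$; your computation $\|v_n\|_{\mathcal{L}}^2=\tfrac{n^2\pi^2}{4}(c+c^{-1})$ is also correct. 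The only part that needs fleshing out is that duality step itself: on the closed Lipschitz curve $\partial\Sigma_l$ one should argue edge by edge, e.g.\ by interpolating the functional $u\mapsto\int_{\mathrm{edge}}u\,g_n$ between $L^2$ (norm $O(n)$) and $H^1$ (norm $O(1)$, using that $g_n$ has an $O(1)$ antiderivative), which gives the stated $O(n^{1/2})$ bound on $H^{1/2}$. Your treatment of the endpoints via closedness of the spectrum is fine and simply replaces the paper's appeal to \cref{th:function:values}.
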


\begin{proof}
Consider an arbitrary fixed point $(x_0,y_0)\in S$. For any fixed  $c>0$ there is  $l > 0$ such that 
\[
\Sigma_l  \equiv (x_0,x_0 + l) \times (y_0,y_0+l/c) \subset S. 
\]
Since $K(x,y)$ is constant on $\Sigma_l$, we can rewrite $\cref{eq:eigen}$ as 
\begin{equation}\label{eq:subregion}
(\lambda - \overline{k}_1) v_{xx} + (\lambda - \overline{k}_2) v_{yy} = 0  \quad \mbox{in } \Sigma_l,  
\end{equation}
where $\overline{k}_1$ and $\overline{k}_2$ are constants and
\[
K(x,y) = \left[ 
\begin{array}{c c}
\overline{k}_1 & 0 \\
0 & \overline{k}_2
\end{array} 
\right], \quad (x,y) \in \Sigma_l. 
\] 

Consider an arbitrary $\lambda$ in the interval $(\overline{k}_1,\overline{k}_2)$. Then \cref{eq:subregion} represents, with
\begin{equation}
\nonumber
c^2 = \frac{\lambda - \overline{k}_1}{\overline{k}_2 - \lambda} > 0, 
\end{equation}
the wave equation \cref{B5.1}. 
Taking any nontrivial solution $\phi$ of \cref{B5.1}, the function $v$  defined on $\Omega$ as
\[
v(x,y)= \left\{ 
\begin{array}{ll}
\phi(x,y), & \quad (x,y)\in\Sigma_l, \\
0, & \quad (x,y)\notin \Sigma_l,
\end{array}
\right.
\]
solves the weak form of the generalized eigenvalue problem \cref{eq:eigen}. 
We conclude that $(\overline{k}_1,\overline{k}_2) \subset \mathrm{sp}(\mathcal{L}^{-1} \mathcal{A})$. 

Since, by construction,
\begin{align}\label{eq:con:proof}
\overline{k}_1 \leq \sup_{(x,y)\in\Omega} \kappa_1(x,y)< \inf_{(x,y)\in\Omega}\kappa_2(x,y)) \leq \overline{k}_2,
\end{align}
it remains to prove that, if the equality is attained at any side of \cref{eq:con:proof}, then the associated $\overline{k}_i$, $i=1$ and/or $i=2$, also belongs to the spectrum of $\bc{L}^{-1}\bc{A}$. But this is trivially true using \cref{th:function:values} because $\overline{k}_i$ is a function value of $\kappa_i(x,y)$ at $\Sigma_l$ where $\kappa_i$ is constant and therefore continuous.
\end{proof}

\Cref{th:constant} shows that, under the given assumptions, the whole closed interval determined by the extremal points of the ranges of $\kappa_1$ and $\kappa_2$ belong to the spectrum of $\bc{L}^{-1}\bc{A}$. Consequently, when the ranges of $\kappa_1$ and $\kappa_2$ are disjoint, the spectrum of $\bc{L}^{-1}\bc{A}$ contains also the interval between them. Please note that here it is not assumed that $K$ is continuous throughout the closure $\overline{\Omega}$ and that the subdomain $S$ is of an arbitrarily small size.

\subsubsection{Tensors continuous at least at a single point}\label{th:disjoint}
The following lemma refines further the assumptions under which the statement of \cref{th:constant} holds.
\begin{lemma} 
\label{th:interval-theorem}
Assume that the diagonal tensor \cref{eq:diag:tensor} with the bounded and Lebesgue integrable functions $\kappa_i$, $i=1,2$, is continuous (at least) at a single point in $\Omega$.
If
\begin{equation}\label{eq:int:ass}
\sup_{(x,y)\in\Omega} \kappa_1(x,y) < \inf_{(x,y)\in\Omega}\kappa_2(x,y),
\end{equation}
then the following closed interval belongs to the spectrum of $\bc{L}^{-1}\bc{A}$,
\begin{equation}\label{eq:int}
[\sup_{(x,y)\in\Omega} \kappa_1(x,y), \, \inf_{(x,y)\in\Omega} \kappa_2(x,y)]  \subset \mathrm{sp}(\mathcal{L}^{-1} \mathcal{A}).
\end{equation}
The analogous statement obviously holds with interchanging the roles of $\kappa_1$ and $\kappa_2$ in \cref{eq:int:ass} and \cref{eq:int}.  
\end{lemma}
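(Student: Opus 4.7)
The plan is to reduce the lemma to \Cref{th:constant} via a perturbation argument: replace $K$ with the constant tensor $K(x_0,y_0)$ on a small neighborhood of the continuity point, apply \Cref{th:constant} to this modified tensor, and then transfer the conclusion to the original $K$ using the stability of the spectrum of bounded self-adjoint operators under operator-norm perturbations.

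Since $\mathrm{sp}(\bc{L}^{-1}\bc{A})$ is closed, it suffices to prove that every $\lambda$ in the open interval $(\sup_\Omega \kappa_1,\inf_\Omega \kappa_2)$ belongs to $\mathrm{sp}(\bc{L}^{-1}\bc{A})$; the endpoints will then follow. Let $(x_0,y_0)\in\Omega$ be a point at which $K$ is continuous, set $\overline{k}_i = \kappa_i(x_0,y_0)$, and write $\overline{K} = \diag(\overline{k}_1,\overline{k}_2)$. For each $\epsilon>0$, continuity provides $r>0$ such that the open ball $S_r = B_r(x_0,y_0)\subset\Omega$ satisfies $\sup_{(x,y)\in S_r}|\kappa_i(x,y)-\overline{k}_i|<\epsilon$ for $i=1,2$. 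Define the perturbed tensor
\[
K_\epsilon(x,y) =
\begin{cases}
\overline{K}, & (x,y)\in S_r,\\
K(x,y), & (x,y)\in\Omega\setminus S_r,
\end{cases}
\]
and let $\bc{A}_\epsilon$ be the operator built from $K_\epsilon$ as in \cref{eq:A}. Then $K_\epsilon$ is a bounded, Lebesgue integrable, diagonal tensor that is constant on the open subdomain $S_r$, and
\[
\sup_\Omega \kappa_{\epsilon,1}\le\sup_\Omega \kappa_1 < \lambda < \inf_\Omega \kappa_2\le\inf_\Omega \kappa_{\epsilon,2},
\]
so \Cref{th:constant} applied to $K_\epsilon$ yields $\lambda\in\mathrm{sp}(\bc{L}^{-1}\bc{A}_\epsilon)$.

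It remains to transfer this conclusion to the original operator. By construction $\|K_\epsilon-K\|_{L^\infty(\Omega)}\le\epsilon$, and \cref{eq:A} then gives $\|\bc{A}_\epsilon-\bc{A}\|_{H_0^1(\Omega)\to H^{-1}(\Omega)}\le\epsilon$ and consequently $\|\bc{L}^{-1}\bc{A}_\epsilon-\bc{L}^{-1}\bc{A}\|_\mathrm{op}\le\epsilon$ on the Hilbert space $(H_0^1(\Omega),(\cdot,\cdot)_\bc{L})$. Since both operators are self-adjoint with respect to this inner product (cf.\ \cref{eq:self:L}), the classical spectral stability estimate for bounded self-adjoint operators gives
\[
d\bigl(\lambda,\mathrm{sp}(\bc{L}^{-1}\bc{A})\bigr)\le\|\bc{L}^{-1}\bc{A}_\epsilon-\bc{L}^{-1}\bc{A}\|_\mathrm{op}\le\epsilon,
\]
and letting $\epsilon\to 0$ together with closedness of $\mathrm{sp}(\bc{L}^{-1}\bc{A})$ yields $\lambda\in\mathrm{sp}(\bc{L}^{-1}\bc{A})$, as required. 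The main step of the argument is the spectral stability estimate; it follows from the resolvent bound $\|(\mu\bc{I}-T)^{-1}\|_\mathrm{op}\le d(\mu,\mathrm{sp}(T))^{-1}$ valid for bounded self-adjoint $T$ and a standard Neumann series expansion, which together show that operator-norm convergence forces Hausdorff convergence of the spectra in the present setting.
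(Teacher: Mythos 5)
Your proposal is correct and follows essentially the same route as the paper: modify $K$ to be constant on a small neighborhood of the continuity point, apply \cref{th:constant} to the modified operator, and transfer the spectral inclusion back via a Neumann-series resolvent perturbation argument (the operator-norm bound $\|\bc{L}^{-1}(\bc{A}_\epsilon-\bc{A})\|_{\bc{L}}\leq\sup_{S_r}\|K(x_0,y_0)-K\|$ is exactly what the paper establishes in \cref{BFnew2}). The only cosmetic difference is that you phrase the transfer as a quantitative Hausdorff-stability estimate using the self-adjoint resolvent bound, whereas the paper argues by contradiction using only the assumed boundedness of $(\bc{L}^{-1}\bc{A}-\lambda\bc{I})^{-1}$; both rest on the same Neumann-series mechanism.
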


\begin{proof}
We will prove the statement by contradiction. Consider
\[\lambda \in [\sup_{(x,y)\in\Omega} \kappa_1(x,y), \, \inf_{(x,y)\in\Omega} \kappa_2(x,y)] \]
such that $\lambda \notin \mathrm{sp}(\bc{L}^{-1} \bc{A})$, i.e., such that the operator $\bc{L}^{-1} \bc{A} - \lambda \bc{I}$ has a bounded inverse.

Let $(x_0,y_0)\in\Omega$ be the point of continuity of the tensor $K(x,y)$. Applying \cref{th:constant} to the preconditioned operator $\bc{L}^{-1}\bc{A}_l$, where $\bc{A}_l$ is defined for any sufficiently small $l$ by
\[\langle \mathcal{A}_l \phi, \psi \rangle \equiv \int_{\Omega} K_l \nabla \phi  \cdot  \nabla \psi, \quad \phi, \psi \in H_0^1(\Omega)\]
and $K_l(x,y)$ is a local modification of $K$,  
\begin{align*}
K_l(x,y)&\equiv \left\{ 
\begin{array}{ll}
K(x_0,y_0), & (x,y) \in S_l, \\
K(x,y), & (x,y) \in \Omega \setminus S_l,
\end{array}
\right. \\
S_l &= (x_0,x_0+l) \times (y_0,y_0+l), 
\end{align*}
yields that
\begin{equation}\label{eq:proof:spectrum}
\lambda  \in  \mathrm{sp}(\bc{L}^{-1} \bc{A}_l).
\end{equation}
On the other hand, since we assume that $\bc{L}^{-1}\bc{A} - \lambda\bc{I}$ is invertible, 
\begin{equation}
\begin{aligned}
\bc{L}^{-1}\bc{A}_l - \lambda\bc{I} 
	&= (\bc{L}^{-1}\bc{A} - \lambda\bc{I}) + (\bc{L}^{-1}\bc{A}_l - \bc{L}^{-1}\bc{A}) \\
	&= (\bc{L}^{-1}\bc{A} - \lambda\bc{I})[\bc{I} + (\bc{L}^{-1}\bc{A} - \lambda\bc{I})^{-1}\bc{L}^{-1}(\bc{A}_l - \bc{A})].
\end{aligned}\nonumber
\end{equation}
In \cref{BFnew2} we prove that for sufficiently small $r>0$
\begin{equation}\label{eq:neumann:con}
\|(\bc{L}^{-1}\bc{A} - \lambda\bc{I})^{-1}\bc{L}^{-1}(\bc{A}_l - \bc{A})\|_{\bc{L}} < 1,
\end{equation}
and the Neumann series argument therefore ensures that $\bc{L}^{-1}\bc{A}_l-\lambda\bc{I}$ has a bounded inverse. Consequently, $\lambda\notin \mathrm{sp}(\bc{L}^{-1} \bc{A}_l)$, which contradicts \cref{eq:proof:spectrum}. (Inequality \cref{eq:neumann:con} holds due to the assumption that $\lambda \notin \mathrm{sp}(\bc{L}^{-1} \bc{A})$ and due to the continuity of $K(x,y)$ at the point $(x_0,y_0)$. See \cref{BFnew2} for further details).
\end{proof}

It is worth noting that the statement of \cref{th:interval-theorem} requires continuity of the tensor $K$ only at an arbitrary {\em single point} belonging to $\Omega$.

\section{Continuous diagonal tensors}\label{sec:diag:ii}
We first complement \cref{th:function:values}, and Theorem~3.1 in \cite{Nie09}, by proving the `reverse inclusion'.
\subsection{The spectrum is a subset of the extremal interval}
\begin{lemma}\label{th:inclusion}
Assume that the diagonal tensor \cref{eq:diag:tensor} is continuous throughout the closure $\overline{\Omega}$. Then
\begin{equation}
 \mathrm{sp}(\mathcal{L}^{-1} \mathcal{A})) \subset \mathrm{Conv}(\kappa_1(\overline{\Omega}) \cup \kappa_2(\overline{\Omega})). 
\end{equation}
\end{lemma}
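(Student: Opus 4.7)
The plan is to prove the reverse inclusion by contrapositive: for any real $\lambda$ outside the claimed closed interval $[a,b]$ with $a=\inf_{\overline{\Omega}}\min\{\kappa_1,\kappa_2\}$ and $b=\sup_{\overline{\Omega}}\max\{\kappa_1,\kappa_2\}$ (which are finite and attained by continuity of $K$ on the compact set $\overline{\Omega}$), I would show that $\lambda\mathcal{I}-\mathcal{L}^{-1}\mathcal{A}$ has a bounded inverse on $H_0^1(\Omega)$. Since $\mathrm{sp}(\mathcal{L}^{-1}\mathcal{A})\subset\mathbb{R}$ by the self-adjointness identity \cref{eq:self:L}, this suffices. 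Unlike the earlier lemmas, this direction does not require constructing oscillatory or singular test functions; the whole argument runs on the quadratic form and on the diagonal structure of $K$.

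The key computation is simply that, for every $u\in H_0^1(\Omega)$,
\begin{equation*}
\bigl((\lambda\mathcal{I}-\mathcal{L}^{-1}\mathcal{A})u,u\bigr)_{\mathcal{L}}
= \langle(\lambda\mathcal{L}-\mathcal{A})u,u\rangle
= \int_{\Omega}\bigl[(\lambda-\kappa_1)u_x^2+(\lambda-\kappa_2)u_y^2\bigr].
\end{equation*}
If $\lambda>b$, then $\lambda-\kappa_i(x,y)\geq\lambda-b>0$ uniformly on $\Omega$, so the right-hand side is bounded below by $(\lambda-b)\|u\|_{\mathcal{L}}^2$. Analogously, if $\lambda<a$, the form is bounded above by $(\lambda-a)\|u\|_{\mathcal{L}}^2<0$, so $-(\lambda\mathcal{I}-\mathcal{L}^{-1}\mathcal{A})$ is coercive with constant $a-\lambda$. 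The crucial point here — and the reason this works so cleanly in the diagonal case — is that $\lambda I-K(x,y)$ is a diagonal pointwise matrix whose definiteness is immediate from the scalar bounds on $\kappa_1$ and $\kappa_2$.

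Combined with the self-adjointness of $\mathcal{L}^{-1}\mathcal{A}$ with respect to $(\cdot,\cdot)_{\mathcal{L}}$, this bound shows that $\lambda\mathcal{I}-\mathcal{L}^{-1}\mathcal{A}$ is a bounded self-adjoint operator on the Hilbert space $(H_0^1(\Omega),(\cdot,\cdot)_{\mathcal{L}})$ which is either positive or negative definite with a uniform spectral gap $\min\{\lambda-b,\,a-\lambda\}$ away from zero. By the Lax–Milgram lemma (or, equivalently, by the spectral theorem for bounded self-adjoint operators), it is a bijection onto $H_0^1(\Omega)$ with bounded inverse of norm at most $1/\min\{\lambda-b,a-\lambda\}$. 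Consequently $\lambda\notin\mathrm{sp}(\mathcal{L}^{-1}\mathcal{A})$, which is the desired reverse inclusion.

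I do not anticipate a real obstacle in this lemma: the diagonal structure turns the quadratic form into a sum of two manifestly signed terms, and continuity of $K$ on $\overline{\Omega}$ is used only to ensure that $a$ and $b$ are finite (boundedness of $\kappa_1,\kappa_2$ would in fact suffice for this inclusion). The proof therefore mirrors the standard Lax–Milgram argument for coercive symmetric bilinear forms, with the one subtlety being that one must invoke self-adjointness to pass from coercivity in the quadratic form to a norm bound on the resolvent, rather than going through a non-symmetric inf–sup inequality.
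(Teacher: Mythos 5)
Your proposal is correct and follows essentially the same route as the paper: both arguments reduce to the observation that the quadratic form $\langle(\lambda\mathcal{L}-\mathcal{A})u,u\rangle=\int_\Omega(\lambda-\kappa_1)u_x^2+(\lambda-\kappa_2)u_y^2$ is controlled by the extreme values of $\kappa_1,\kappa_2$, combined with self-adjointness in the $\mathcal{L}$-inner product. The only difference is presentational — the paper cites the standard fact that the spectrum of a bounded self-adjoint operator lies in the closure of its numerical range, whereas you unpack that fact into an explicit coercivity/Lax--Milgram resolvent bound for each $\lambda$ outside the interval.
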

\begin{proof}
Using the self-adjointness \cref{eq:self:L} of the operator $\bc{L}^{-1}\bc{A}$, we can use the standard results from the theory of self-adjoint operators (see, e.g., \cite[Section 6.5]{1982Friedman_Book}) and conclude that the spectrum of $\bc{L}^{-1}\bc{A}$ is real and that 
\begin{align}
\nonumber
 \mathrm{sp}(\mathcal{L}^{-1} \mathcal{A})
&\subset
\left[\inf_{u\in H_0^1(\Omega)} \frac{(\bc{L}^{-1}\bc{A}u,u)_{\bc{L}}}{(u,u)_{\bc{L}}},\
		\sup_{u\in H_0^1(\Omega)} \frac{(\bc{L}^{-1}\bc{A}u,u)_{\bc{L}}}{(u,u)_{\bc{L}}}\right] \\
		&=
	\left[\inf_{u\in H_0^1(\Omega)} \frac{\langle\bc{A}u,u\rangle}{\langle\bc{L}u,u\rangle},\
		\sup_{u\in H_0^1(\Omega)} \frac{\langle\bc{A}u,u\rangle}{\langle\bc{L}u,u\rangle}\right].
\end{align}
Moreover, the endpoints of this interval are contained in the spectrum.

It remains to bound
\begin{equation}\label{eq:values}
\frac{\langle\bc{A}u,u\rangle}{\langle\bc{L}u,u\rangle}
\end{equation} in terms of the extreme values of the scalar functions $\kappa_1$ and $\kappa_2$. Since $ u_x^2(x,y)\geq 0$ and $ u_y^2(x,y)\geq 0$, we can bound   \cref{eq:values} as follows 
\begin{align}
\nonumber
\sup_{u\in H_0^1(\Omega)} \frac{\langle\bc{A}u,u\rangle}{\langle\bc{L}u,u\rangle}
	&= \sup_{u\in H_0^1(\Omega)} \frac{\int_{\Omega} K \nabla u \cdot \nabla u}{\int_{\Omega} \|\nabla u\|^2}
	= \sup_{u\in H_0^1(\Omega)} \frac{\int_{\Omega} \kappa_1 u_x^2 + \kappa_2 u_y^2}{\int_{\Omega} \|\nabla u\|^2}\\
\nonumber
	&\leq \sup_{u\in H_0^1(\Omega)} \frac{\int_{\Omega} \sup_{(x,y) \in \Omega}\max_{i=1,2}\{ \kappa_i(x,y) \} \, \| \nabla u \|^2}{\int_{\Omega} \|\nabla u\|^2}  \\
	\label{D2}
	&\leq \sup_{(x,y) \in \Omega}  \max_{i=1,2} \{\kappa_i(x,y)\}.
\end{align}
Similarly,
\begin{align*}
\inf_{u\in H_0^1(\Omega)} \frac{\langle\bc{A}u,u\rangle}{\langle\bc{L}u,u\rangle}	&\geq \inf_{(x,y) \in \Omega} \min_{i=1,2} \{\kappa_i(x,y)\}.
\end{align*}
For $K(x,y)$ continuous on $\overline{\Omega}$, the infimum and supremum of its components $\kappa_1(x,y)$ and $\kappa_2(x,y)$ are attained. Please notice that there is no assumption about the positive (negative) definiteness of $K$.
\end{proof}
\noindent
We are now ready to prove \cref{th:theorem} for continuous diagonal tensors.

\subsection{Main result --  diagonal tensors}\label{sec:maindiag}
\begin{theorem}
\label{th:theorem:diag}
Consider an open and bounded Lipschitz domain $\Omega\subset \nmbr{R}^2$.
If the diagonal tensor \cref{eq:diag:tensor} is continuous throughout the closure $\overline{\Omega}$, 
then 
\begin{equation}
 \mathrm{sp}(\mathcal{L}^{-1} \mathcal{A})) = \mathrm{Conv}(\kappa_1(\overline{\Omega}) \cup \kappa_2(\overline{\Omega})).\nonumber
\end{equation}
\end{theorem}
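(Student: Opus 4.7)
My plan is to prove both inclusions in the claimed equality. The inclusion $\mathrm{sp}(\mathcal{L}^{-1}\mathcal{A}) \subset \mathrm{Conv}(\kappa_1(\overline{\Omega}) \cup \kappa_2(\overline{\Omega}))$ has already been delivered by \cref{th:inclusion}, so all the substantive work lies in the reverse inclusion, namely that the entire interval $[\alpha,\beta] = [\inf_{\overline{\Omega}}\min_i \kappa_i,\, \sup_{\overline{\Omega}}\max_i \kappa_i]$ is contained in $\mathrm{sp}(\mathcal{L}^{-1}\mathcal{A})$.

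The first step is to record that $\kappa_1(\overline{\Omega}) \cup \kappa_2(\overline{\Omega}) \subset \mathrm{sp}(\mathcal{L}^{-1}\mathcal{A})$. A direct application of \cref{th:function:values} at any point $(x_0,y_0)\in\Omega$ places $\kappa_i(x_0,y_0)$ in the spectrum for $i=1,2$. Since $\mathcal{L}^{-1}\mathcal{A}$ is self-adjoint on the Hilbert space $(H_0^1(\Omega), (\cdot,\cdot)_{\mathcal{L}})$ by \cref{eq:self:L}, its spectrum is a closed subset of $\mathbb{R}$. Combining this closedness with the continuity of $\kappa_i$ on $\overline{\Omega}$, which ensures that every point of $\kappa_i(\overline{\Omega})$ is the limit of values attained at interior points, promotes the inclusion from $\kappa_i(\Omega)$ to $\kappa_i(\overline{\Omega})$.

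The second step is to fill any possible gap between the two ranges. Because $\overline{\Omega}$ is compact and connected and each $\kappa_i$ is continuous, each range $\kappa_i(\overline{\Omega})$ is a closed interval, say $[a_i,b_i]$. If these two intervals overlap then their union is a single closed interval equal to $[\alpha,\beta]$ and the first step already concludes the argument. Otherwise, after relabelling if necessary, we have $\sup_{\overline{\Omega}}\kappa_1 = b_1 < a_2 = \inf_{\overline{\Omega}}\kappa_2$, which is precisely the hypothesis of \cref{th:interval-theorem}; the additional hypothesis of continuity of $K$ at a single point is trivially satisfied since $K$ is continuous on the whole of $\overline{\Omega}$. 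That lemma places the closed interval $[b_1,a_2]$ into the spectrum, and together with the first step this covers the entire convex hull $[\alpha,\beta]$.

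The proof is thus a clean assembly of the lemmas already proved, and I do not expect any serious technical obstacle; the only real decision is the case distinction on whether the ranges overlap. The conceptually nontrivial ingredient is \cref{th:interval-theorem}, which encapsulates the tensor-specific gap-filling phenomenon absent in the scalar case of \cite{Nie09}; once it is available, the continuity of $K$ on $\overline{\Omega}$ and the resulting compactness of the ranges $\kappa_i(\overline{\Omega})$ make everything else routine.
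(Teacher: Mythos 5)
Your proof is correct and follows essentially the same route as the paper: \cref{th:function:values} for the union of the ranges, closedness of the spectrum of the self-adjoint operator to pass from $\Omega$ to $\overline{\Omega}$, the gap-filling lemma for disjoint ranges, and \cref{th:inclusion} for the reverse inclusion. If anything, your explicit case distinction on whether the ranges overlap and your appeal to \cref{th:interval-theorem} (rather than \cref{th:constant}, which the paper's proof cites but which formally requires $K$ to be constant on an open subdomain) make your write-up slightly more precise than the paper's own.
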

\begin{proof}
Assume  that the diagonal tensor $K(x,y)$ is continuous throughout $\overline{\Omega}$. Then, by \cref{th:function:values,th:constant}, 
\begin{align}\nonumber
\mathrm{Conv}(\kappa_1(\Omega) \cup \kappa_2(\Omega)) \subset\  &\mathrm{sp}(\mathcal{L}^{-1} \mathcal{A}), 
\end{align}
and due to the continuity of $K(x,y)$, and the fact that $\mathrm{sp}(\mathcal{L}^{-1} \mathcal{A})$ is a closed set (see, e.g., \cite{BRen93}),
\begin{align}\nonumber
\mathrm{Conv}(\kappa_1(\overline{\Omega}) \cup \kappa_2(\overline{\Omega})) \subset\  &\mathrm{sp}(\mathcal{L}^{-1} \mathcal{A}).
\end{align}%
Finally, by \cref{th:inclusion},
\begin{align}\nonumber
&\mathrm{sp}(\mathcal{L}^{-1} \mathcal{A}) \subset \mathrm{Conv}(\kappa_1(\overline{\Omega}) \cup \kappa_2(\overline{\Omega})),
\end{align}
which gives the statement.
\end{proof}

\section{Proof of \cref{th:theorem}}\label{sec:symm}
It remains to revisit and complete the arguments given above for the general self-adjoint operator in \cref{eq:eigen}. 
Consider the general symmetric tensor 
\begin{equation}
\label{eq:symm:tensor}
K(x,y)=\left[ 
\begin{array}{cc}
k_1(x,y) & k_3(x,y) \\
k_3(x,y) & k_2(x,y)
\end{array}
\right],
\end{equation}
where  $k_1$, $k_2$ and $k_3$ are bounded and Lebesgue integrable functions defined on $\Omega$,
with the spectral decomposition  
\begin{equation}
\label{eq:symm:tensor:dec}
K(x,y)=Q(x,y) \left[ 
\begin{array}{cc}
\kappa_1(x,y) & 0 \\
0 & \kappa_2(x,y)
\end{array}
\right]  Q^T(x,y);
\end{equation}
see \cref{eq:decomposition}.

The structure of the proof of \cref{th:theorem} is  fully analogous to the proof of \cref{th:theorem:diag} formulated for diagonal tensors. We will now restate the associated lemmas for the general case and comment on the technical differences that must be considered. 

For convenience we will use, when appropriate, the  column vector notation
\begin{equation}\nonumber
\alg{w} = (x,y)^T,\quad (x,y)\in\Omega,
\end{equation}
and for any function $f$ defined on $\Omega$ its gradient $\nabla f$ will be considered as a column vector.

\begin{lemma}[see \cref{th:function:values}]\label{th:function:values:ii}
Consider the symmetric tensor \cref{eq:symm:tensor} with the spectral decomposition \cref{eq:symm:tensor:dec}. If the tensor $K$ is continuous at $(x_0,y_0) \in \Omega$, then 
\[
\kappa_i(x_0,y_0) \in \mathrm{sp}(\mathcal{L}^{-1} \mathcal{A}),\quad i=1,2. 
\]
\end{lemma}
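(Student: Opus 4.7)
The plan is to mirror the Weyl-sequence construction from the proof of \cref{th:function:values}, the sole new ingredient being that the thin test-function rectangle is oriented along the eigenvectors of $K(x_0,y_0)$ instead of along the Cartesian axes. Fix $i\in\{1,2\}$, let $\lambda \equiv \kappa_i(x_0,y_0)$, and let $j$ denote the other index. Writing $Q(x_0,y_0)=[q_1\ q_2]$ from the spectral decomposition \cref{eq:symm:tensor:dec}, I would introduce the rotated coordinates $\xi = q_i^T(\mathbf{w}-\mathbf{w}_0)$, $\eta = q_j^T(\mathbf{w}-\mathbf{w}_0)$, and define $v_r$ by the formula \cref{B0} with $(x-x_0,y-y_0)$ replaced by $(\xi,\eta)$. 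Its support is then a rotated rectangle $R_r$ of widths $2r^2$ along $q_i$ and $2r$ along $q_j$; since $(x_0,y_0)$ is an interior point of the open set $\Omega$, $R_r\subset\Omega$ for all sufficiently small $r>0$, and $v_r\in H_0^1(\Omega)$ after extension by zero.

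By the chain rule $\nabla v_r = v_r^{(\xi)} q_i + v_r^{(\eta)} q_j$, and orthogonality of $Q(x_0,y_0)$ yields the pointwise identity $\|\nabla v_r\|_2^2 = |v_r^{(\xi)}|^2 + |v_r^{(\eta)}|^2$. The bounds \cref{B1} therefore transfer verbatim, giving $\|v_r^{(\xi)}\|_{L^2(\Omega)}^2 = 4 + O(r)$, $\|v_r^{(\eta)}\|_{L^2(\Omega)}^2 = O(r)$, and hence $\lim_{r\to 0}\|v_r\|_{\mathcal{L}} = 2$. Setting $u_r = (\lambda\mathcal{I} - \mathcal{L}^{-1}\mathcal{A})v_r$, the dual-variational computation preceding \cref{B4} gives
\[
\|u_r\|_{\mathcal{L}}^2 \leq \int_{R_r} \bigl| (\lambda I - K(\mathbf{w}))\,\nabla v_r \bigr|^2.
\]

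Split $\lambda I - K(\mathbf{w}) = \bigl(\lambda I - K(x_0,y_0)\bigr) + \bigl(K(x_0,y_0) - K(\mathbf{w})\bigr)$. The first (algebraic) summand annihilates $q_i$, since $K(x_0,y_0)q_i = \lambda q_i$, so only the $q_j$-component of $\nabla v_r$ contributes, and its $L^2$ norm is bounded by $|\lambda-\kappa_j(x_0,y_0)|\,\|v_r^{(\eta)}\|_{L^2(\Omega)} = O(\sqrt{r})$. The second (continuity) summand has matrix norm $\sup_{R_r}\|K(\mathbf{w})-K(x_0,y_0)\|$, which tends to zero by continuity of $K$ at $(x_0,y_0)$, and multiplies $\|\nabla v_r\|_{L^2(\Omega)}\leq 2$. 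Both contributions vanish, so $\|u_r\|_{\mathcal{L}}\to 0$ while $\|v_r\|_{\mathcal{L}}\to 2$, and the bounded-inverse contradiction of \cref{th:function:values} forces $\lambda\in\mathrm{sp}(\mathcal{L}^{-1}\mathcal{A})$. The main technical point will be the clean algebraic/continuity split of the residual: once one recognizes that the dominant component of $\nabla v_r$ lies along $q_i$ and that $q_i$ is precisely the eigenvector annihilated by $\lambda I - K(x_0,y_0)$, the remaining scaling estimates reduce to those already verified in the diagonal case.
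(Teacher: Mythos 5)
Your proposal is correct and follows essentially the same route as the paper's proof: rotate the thin test rectangle by $Q(x_0,y_0)$ so that its long axis is aligned with the eigenvector complementary to $\kappa_i$, transfer the estimates \cref{B1} via the orthogonal change of variables, and split the residual $\lambda I - K$ into the algebraic part $\lambda I - K(x_0,y_0)$ (which kills the dominant gradient component) and the continuity part $K(x_0,y_0)-K$. The only cosmetic difference is that you divide out $\|u_r\|_{\mathcal{L}}$ before splitting, whereas the paper splits the integral against $\nabla u_r$ first; the estimates are the same.
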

\begin{proof}
We will use the following notation for the spectral decomposition of $K(x,y)$ at the point of continuity $(x_0,y_0)$:
\begin{align*}
& K_0 \equiv K(x_0,y_0) = Q_0 \Lambda_0 Q_0^T, \quad Q_0 \equiv Q(x_0,y_0),\quad Q_0^TQ_0=I,\\
& \Lambda_0 \equiv \Lambda(x_0,y_0) = \mbox{diag}(\kappa_1(x_0,y_0),\kappa_2(x_0,y_0)).
\end{align*}

Simple algebraic computations give that, for any $(x,y)\in\Omega$,
\begin{equation}\label{eq:roots}
\kappa_{1} = \tfrac{1}{2}(k_1 + k_2 + \sqrt{D}),\quad \kappa_{2} = \tfrac{1}{2}(k_1 + k_2 - \sqrt{D})
\end{equation}
where $D = (k_1-k_2)^2+4k_3^2$.
Therefore, at any point of continuity of the tensor $K(x,y)$, the functions $\kappa_1(x,y)$ and $\kappa_2(x,y)$ are also continuous.

For sufficiently small $r$, consider the closed neighborhood $R_r$ defined in \cref{eq:domain:r} and its counterpart defined as
\begin{equation}\nonumber
S_r =  \{ Q_0 \alg{z} \; | \; \alg{z} \in R_r \},
\end{equation}
where the choice of $r$ in \cref{eq:domain:r} ensures that both $R_r  \subset \Omega$ and $S_r  \subset \Omega$. Consider the functions
\begin{equation}\nonumber
\tilde{v}_r(\alg{w}) \equiv v_r(Q_0^T \alg{w}), \quad  \alg{w}\in \Omega, 
\end{equation}
where  $v_r$ is defined in \cref{B0}.
Since $|\mbox{det}\, Q|=1$, the change of variables gives
\begin{equation}
\|\tilde{v}_r\|_{\bc{L}}^2 = \int_{S_r} \|\nabla \tilde{v}_r(\alg{w})\|^2 d\alg{w} = \int_{R_r} \|\nabla v_r(\alg{z})\|^2 d\alg{z}=  \|v_r\|_{\bc{L}}^2,
\end{equation}
and, from \cref{B2},
\begin{equation}\label{eq:pr:lim:w}
\lim_{r\to 0} \|\tilde{v}_r\|_{\bc{L}}  = 2 \neq 0.
\end{equation}

Analogously to \cref{B3} we consider
\begin{equation}\nonumber
u_r \equiv (\lambda \mathcal{I} - \mathcal{L}^{-1} \mathcal{A}) \tilde{v}_r,\quad \lambda \equiv \kappa_1(x_0,y_0),  
\end{equation}
with the norm
\begin{align}
 \| u_r \|_{\bc{L}}^2 &= \int_{\Omega} (\lambda I - K) \nabla \tilde{v}_r  \cdot  \nabla u_r,  \\
&= \int_{S_r} (\lambda I - K_0) \nabla \tilde{v}_r  \cdot  \nabla u_r 
+ \int_{S_r} (K_0 - K) \nabla \tilde{v}_r  \cdot  \nabla u_r.\label{eq:two:integrals}
\end{align}
Our goal is to show that if $\lambda\notin \mathrm{sp}(\mathcal{L}^{-1} \mathcal{A})$, then $\lim_{r\to0}\|u_r\|_{\bc{L}} = 0$, which contradicts \cref{eq:pr:lim:w}.
Concerning the second integral in \cref{eq:two:integrals},
\begin{align*}
\int_{S_r} (K_0 - K) \nabla \tilde{v}_r  \cdot  \nabla u_r &
\leq \sup_{\alg{w} \in S_r } \|K_0 - K(\alg{w}) \|  \, \|\tilde{v}_r\|_{\bc{L}}\,  \|u_r\|_{\bc{L}}.
\end{align*}
Using the continuity of $K(x,y)$ at the point $(x_0,y_0)$ and the fact that $\|\tilde{v}_r\|_{\bc{L}}\|u_r\|_{\bc{L}}$ is bounded, the second integral on the right hand side of \cref{eq:two:integrals} vanishes as $r\to0$.
For the remaining term in \cref{eq:two:integrals} we find that 
\begin{align*}
\int_{S_r} (\lambda I - K_0) \nabla \tilde{v}_r  \cdot  \nabla u_r
&= \int_{S_r} Q_0(\lambda I  - \Lambda_0) Q_0^T \nabla \tilde{v}_r  \cdot  \nabla u_r\\
%
&\leq  \left( \int_{S_r} \|Q_0(\lambda I - \Lambda_0) Q_0^T \nabla \tilde{v}_r\|^2  \right)^{1/2} \, \| u_r \|_{\bc{L}}.
\end{align*}
Applying the chain rule  gives $\nabla \tilde{v}_r(\alg{w}) = Q_0 \nabla v_r({Q_0^T}\alg{w}) = Q_0 \nabla v_r(\alg{z})$, which together with orthogonality of $Q_0$ gives (considering $\lambda = \kappa_1(x_0,y_0)$)
\begin{align*}
\int_{S_r} \|Q_0(\lambda I - \Lambda_0) Q_0^T \nabla \tilde{v}_r\|^2  &
= \int_{S_r} \|(\lambda I - \Lambda_0) \nabla v_r(Q_0^T\alg{w})\|^2   \\
&= \int_{R_r} \|(\lambda I - \Lambda_0) \nabla v_r(\alg{z})\|^2  	\\
&= \int_{R_r} \|(\lambda  - \kappa_2(x_0,y_0)) (v_r)_y(\alg{z})\|^2 	\\
&\leq |\lambda - \kappa_2(x_0,y_0)| \, \|(v_r)_y\|^2_{L^2(\Omega)}, 
\end{align*}
where the upper bound vanishes as $r\to0$ due to \cref{B1}.

The proof that $\kappa_2(x_0,y_0)$ belongs to the spectrum of the preconditioned operator, provided that the assumptions of the lemma hold, is trivially analogous. 
\end{proof}

\noindent
The remaining part of the proof of Theorem 1.1 is a straightforward extension of the analysis presented in \cref{sec:diag:ii}.

\begin{lemma}[see \cref{th:constant}]
\label{th:constant:ii}
Consider a symmetric tensor \cref{eq:symm:tensor}, with bounded and Lebesgue integrable functions $k_i$, $i = 1,2,3$, which are constant on an open subdomain $S\subset\Omega$. Assuming that
\begin{equation}
\label{eq:constant:ii:ass}
\sup_{(x,y)\in\Omega} \kappa_1(x,y) < \inf_{(x,y)\in\Omega}\kappa_2(x,y),
\end{equation}
then the following closed interval belongs to the spectrum of $\bc{L}^{-1}\bc{A}$,
\begin{equation}\label{eq:constant:ii}
[\sup_{(x,y)\in\Omega} \kappa_1(x,y), \, \inf_{(x,y)\in\Omega} \kappa_2(x,y)]  \subset \mathrm{sp}(\mathcal{L}^{-1} \mathcal{A}).
\end{equation}
The analogous statement obviously holds with interchanging the roles of $\kappa_1$ and $\kappa_2$ in \cref{eq:constant:ii:ass} and \cref{eq:constant:ii}.
\end{lemma}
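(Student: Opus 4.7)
The plan is to mirror the proof of \cref{th:constant} after a rigid rotation that diagonalizes the (constant) value of $K$ on $S$. Fix any point $\alg{w}_0=(x_0,y_0)\in S$ and let $\overline{K}$ denote the constant value of $K$ on $S$, with spectral decomposition $\overline{K}=\overline{Q}\,\overline{\Lambda}\,\overline{Q}^T$ and $\overline{\Lambda}=\mathrm{diag}(\overline{\kappa}_1,\overline{\kappa}_2)$. Since $\overline{\kappa}_1$ and $\overline{\kappa}_2$ are function values of $\kappa_1$ and $\kappa_2$ at $\alg{w}_0$, the assumption \cref{eq:constant:ii:ass} forces $\overline{\kappa}_1\leq\sup_{\Omega}\kappa_1<\inf_{\Omega}\kappa_2\leq\overline{\kappa}_2$, so in particular $\overline{\kappa}_1<\overline{\kappa}_2$.

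For any fixed $\lambda\in(\overline{\kappa}_1,\overline{\kappa}_2)$ I would construct a nontrivial weak null-vector of $\mathcal{A}-\lambda\mathcal{L}$ as follows. Work in the rotated coordinates $\alg{z}=\overline{Q}^T(\alg{w}-\alg{w}_0)$. The image $S'=\overline{Q}^T(S-\alg{w}_0)$ is open and contains the origin, so for $l>0$ sufficiently small the rectangle $\Sigma'_l=(0,l)\times(0,l/c)$ with $c^2=(\lambda-\overline{\kappa}_1)/(\overline{\kappa}_2-\lambda)>0$ lies inside $S'$. Pick a nontrivial wave-equation solution $\phi$ on $\Sigma'_l$ exactly as in \cref{B5.01}, and define $\tilde v(\alg{w})=\phi(\overline{Q}^T(\alg{w}-\alg{w}_0))$ on $\alg{w}_0+\overline{Q}\Sigma'_l\subset S$ and $\tilde v=0$ elsewhere. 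Since $\phi$ vanishes on $\partial\Sigma'_l$ and $\overline{Q}$ is a rigid motion, $\tilde v\in H_0^1(\Omega)$ and is nontrivial.

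The key computation is that the rotation intertwines the general operator with its diagonal form. Using the chain rule $\nabla\tilde v(\alg{w})=\overline{Q}\nabla\phi(\alg{z})$, the identity $\overline{K}\overline{Q}=\overline{Q}\,\overline{\Lambda}$, orthogonality of $\overline{Q}$, and the change of variables $\alg{z}=\overline{Q}^T(\alg{w}-\alg{w}_0)$ (with $|\det\overline{Q}|=1$), I would rewrite, for every $\psi\in H_0^1(\Omega)$,
\[
\langle(\mathcal{A}-\lambda\mathcal{L})\tilde v,\psi\rangle=\int_{\Sigma'_l}(\overline{\Lambda}-\lambda I)\nabla\phi(\alg{z})\cdot\nabla\tilde\psi(\alg{z})\,d\alg{z},\qquad \tilde\psi(\alg{z})\equiv\psi(\alg{w}_0+\overline{Q}\alg{z}).
\]
The right-hand side is precisely the bilinear form whose vanishing for the wave-equation solution extended by zero was the content of the proof of \cref{th:constant}; invoking that argument in the rotated variables yields $\lambda\in\mathrm{sp}(\mathcal{L}^{-1}\mathcal{A})$. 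Thus $(\overline{\kappa}_1,\overline{\kappa}_2)\subset\mathrm{sp}(\mathcal{L}^{-1}\mathcal{A})$, which in turn covers the open interior of $[\sup_{\Omega}\kappa_1,\inf_{\Omega}\kappa_2]\subset[\overline{\kappa}_1,\overline{\kappa}_2]$; the only boundary cases are when $\sup_{\Omega}\kappa_1=\overline{\kappa}_1$ or $\inf_{\Omega}\kappa_2=\overline{\kappa}_2$, and these are handled by \cref{th:function:values:ii}, because $K$ is constant (hence continuous) at $\alg{w}_0\in S$ and $\overline{\kappa}_i=\kappa_i(\alg{w}_0)$.

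The main obstacle is making the rotational reduction watertight: one must check that the chain rule together with the spectral decomposition of the constant tensor $\overline{K}$ aligns the bilinear form $\langle(\mathcal{A}-\lambda\mathcal{L})\tilde v,\psi\rangle$ exactly with its diagonal counterpart in the $\alg{z}$ variables, despite the fact that the rotated test function $\tilde\psi$ is not a priori a $H_0^1(\overline{Q}^T(\Omega-\alg{w}_0))$ function supported in $\Sigma'_l$. Everything else—the existence and trace-vanishing of the wave-equation solution $\phi$, the $H_0^1$-regularity of the zero-extension, and the recovery of the endpoints—is inherited verbatim from \cref{th:constant} and \cref{th:function:values:ii}.
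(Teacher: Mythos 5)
Your argument is essentially the paper's own proof: the paper likewise performs the orthogonal change of variables $\alg{w}=\bar{Q}\alg{z}$ to turn the constant tensor into its diagonal form $\bar{\Lambda}$ on the rotated subdomain and then invokes the argument of \cref{th:constant} verbatim (including the recovery of the endpoints via the function-value lemma). Your write-up merely spells out the bilinear-form identity and the $H_0^1$ bookkeeping that the paper leaves implicit, so it is the same route in more detail.
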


\begin{proof}
Since $K(x,y)$ and its spectral decomposition $K = \bar{Q} \bar{\Lambda} \bar{Q}^T$ are constant on $S$,
the change of variables $\alg{w}  = \bar{Q}\alg{z}$ 
transforms the eigenvalue problem \cref{eq:eigen} in the subdomain $S$ to the form
\[
\nabla_{\alg{z}} \cdot (\bar{\Lambda} \nabla_{\alg{z}} v) = \lambda \Delta_{\alg{z}} v \quad \mbox{in } 
R =  \{  \bar{Q}^T \alg{w} \, | \, \alg{w} \in S \},
\]
where the diagonal tensor $\bar{\Lambda}$ is constant. Employing the argument used to prove \cref{th:constant} finishes the proof.
\end{proof} 

\begin{lemma}[see \cref{th:interval-theorem}] 
\label{th:interval-theorem:ii}
Assume that the symmetric tensor \cref{eq:symm:tensor} with the bounded and Lebesgue integrable functions $k_i$, $i=1,2,3$, is continuous at least at a single point in $\Omega$.
Assuming that
\begin{equation}\label{eq:int:ii:ass}
\sup_{(x,y)\in\Omega} \kappa_1(x,y) < \inf_{(x,y)\in\Omega}\kappa_2(x,y),
\end{equation}
then the following closed interval belongs to the spectrum of $\bc{L}^{-1}\bc{A}$,
\begin{equation}\label{eq:int:ii}
[\sup_{(x,y)\in\Omega} \kappa_1(x,y), \, \inf_{(x,y)\in\Omega} \kappa_2(x,y)]  \subset \mathrm{sp}(\mathcal{L}^{-1} \mathcal{A}).
\end{equation}
The analogous statement obviously holds with interchanging the roles of $\kappa_1$ and $\kappa_2$ in \cref{eq:int:ii:ass} and \cref{eq:int:ii}.
\end{lemma}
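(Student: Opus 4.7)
The plan is to mirror, step for step, the proof of \cref{th:interval-theorem}, with \cref{th:constant:ii} playing the role of \cref{th:constant} as the constant-tensor building block. I would suppose for contradiction that some $\lambda \in [\sup_\Omega \kappa_1,\,\inf_\Omega \kappa_2]$ satisfies $\lambda \notin \mathrm{sp}(\mathcal{L}^{-1}\mathcal{A})$, so that $\mathcal{L}^{-1}\mathcal{A} - \lambda \mathcal{I}$ admits a bounded inverse in the $\|\cdot\|_{\bc{L}}$ norm, and let $(x_0, y_0) \in \Omega$ be a point of continuity of the full symmetric tensor $K$ guaranteed by the hypothesis.

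For small $l > 0$, I would introduce the locally modified tensor
\begin{equation*}
K_l(x,y) \equiv \begin{cases} K(x_0,y_0), & (x,y) \in S_l, \\ K(x,y), & (x,y) \in \Omega \setminus S_l, \end{cases} \qquad S_l = (x_0, x_0+l) \times (y_0, y_0+l),
\end{equation*}
together with the associated operator $\mathcal{A}_l$ defined by $\langle \mathcal{A}_l \phi, \psi \rangle = \int_\Omega K_l \nabla \phi \cdot \nabla \psi$ for $\phi,\psi\in H_0^1(\Omega)$. Because the inserted eigenvalues $\kappa_i(x_0,y_0)$ already lie inside the original ranges, the strict-separation hypothesis \cref{eq:int:ii:ass} is preserved for $K_l$; \cref{th:constant:ii}, applied on the open subdomain $S_l$ on which $K_l$ is constant, then yields $\lambda \in \mathrm{sp}(\mathcal{L}^{-1}\mathcal{A}_l)$.

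It remains to derive a contradiction via the standard Neumann-series factorization
\begin{equation*}
\mathcal{L}^{-1}\mathcal{A}_l - \lambda \mathcal{I} = (\mathcal{L}^{-1}\mathcal{A} - \lambda \mathcal{I})\bigl[\mathcal{I} + (\mathcal{L}^{-1}\mathcal{A} - \lambda \mathcal{I})^{-1} \mathcal{L}^{-1}(\mathcal{A}_l - \mathcal{A})\bigr],
\end{equation*}
by showing that $\|(\mathcal{L}^{-1}\mathcal{A} - \lambda \mathcal{I})^{-1} \mathcal{L}^{-1}(\mathcal{A}_l - \mathcal{A})\|_{\bc{L}} < 1$ for $l$ small enough; this would make the bracketed factor invertible and hence force $\lambda\notin\mathrm{sp}(\mathcal{L}^{-1}\mathcal{A}_l)$, contradicting the previous paragraph. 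The only calculation genuinely new relative to the diagonal case is the matrix-valued Cauchy--Schwarz estimate
\begin{equation*}
|\langle (\mathcal{A}_l - \mathcal{A})\phi, \psi\rangle| \leq \sup_{(x,y) \in S_l} \|K(x_0,y_0) - K(x,y)\|_2 \, \|\phi\|_{\bc{L}} \, \|\psi\|_{\bc{L}},
\end{equation*}
where $\|\cdot\|_2$ denotes the spectral matrix norm. This bound tends to $0$ as $l \to 0$ by the entrywise continuity of $K$ at $(x_0,y_0)$, and combined with the boundedness of $(\mathcal{L}^{-1}\mathcal{A} - \lambda \mathcal{I})^{-1}$ (from the contradiction hypothesis) it delivers the required smallness. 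The main obstacle to highlight is precisely this matrix-valued continuity input — continuity of the full tensor $K$, not merely of its scalar eigenvalue functions $\kappa_1,\kappa_2$ — which is exactly the hypothesis of the lemma; with it in hand, the rest of the argument transports verbatim from the proof of \cref{th:interval-theorem} and from the appendix estimate invoked there.
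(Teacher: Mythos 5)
Your proposal is correct and follows exactly the route the paper takes: the paper's proof of this lemma simply states that it is ``fully analogous to the proof of \cref{th:interval-theorem}'', and the details you supply (local replacement by the constant tensor $K(x_0,y_0)$ on $S_l$, invoking \cref{th:constant:ii}, and the Neumann-series perturbation bound with the matrix-norm estimate $\sup_{(x,y)\in S_l}\|K(x_0,y_0)-K(x,y)\|$ from \cref{BFnew2}) are precisely the ones the paper relies on. No gaps.
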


\begin{proof}
The proof is fully analogous to the proof of \cref{th:interval-theorem}.
\end{proof}

\begin{lemma}[see \cref{th:inclusion}]\label{th:inclusion:ii}
Let the symmetric tensor \cref{eq:symm:tensor} be continuous throughout the closure $\overline{\Omega}$. Then
\begin{equation}\nonumber
 \mathrm{sp}(\mathcal{L}^{-1} \mathcal{A})) \subset \mathrm{Conv}(\kappa_1(\overline{\Omega}) \cup \kappa_2(\overline{\Omega})). 
\end{equation}
\end{lemma}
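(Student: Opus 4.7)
The plan is to mirror the proof of \cref{th:inclusion}, with the only new ingredient being a pointwise diagonalization under the integral sign. As before, the self-adjointness \cref{eq:self:L} of $\bc{L}^{-1}\bc{A}$ together with standard results on self-adjoint operators (see \cite[Section 6.5]{1982Friedman_Book}) yields that $\mathrm{sp}(\bc{L}^{-1}\bc{A})\subset\nmbr{R}$ and
\begin{equation}\nonumber
\mathrm{sp}(\bc{L}^{-1}\bc{A}) \subset
\left[\inf_{u\in H_0^1(\Omega)}\frac{\langle\bc{A}u,u\rangle}{\langle\bc{L}u,u\rangle},\
\sup_{u\in H_0^1(\Omega)}\frac{\langle\bc{A}u,u\rangle}{\langle\bc{L}u,u\rangle}\right],
\end{equation}
so it suffices to control the Rayleigh quotient on the right.

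The key step is to exploit the pointwise spectral decomposition \cref{eq:symm:tensor:dec}. For fixed $(x,y)\in\Omega$ and $u\in H_0^1(\Omega)$, set $\mathbf{p}(x,y) = Q^T(x,y)\nabla u(x,y) = (p_1,p_2)^T$. Then
\begin{equation}\nonumber
K\nabla u\cdot\nabla u = Q\Lambda Q^T\nabla u\cdot\nabla u = \Lambda\mathbf{p}\cdot\mathbf{p} = \kappa_1 p_1^2 + \kappa_2 p_2^2,
\end{equation}
while the orthogonality of $Q$ gives $\|\nabla u\|^2 = \|Q^T\nabla u\|^2 = p_1^2+p_2^2$. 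Consequently, at every $(x,y)\in\Omega$,
\begin{equation}\nonumber
\min_{i=1,2}\kappa_i(x,y)\,\|\nabla u\|^2 \ \le\ K\nabla u\cdot\nabla u \ \le\ \max_{i=1,2}\kappa_i(x,y)\,\|\nabla u\|^2.
\end{equation}
Integrating these pointwise inequalities over $\Omega$ and dividing by $\int_{\Omega}\|\nabla u\|^2=\langle\bc{L}u,u\rangle$ yields
\begin{equation}\nonumber
\inf_{(x,y)\in\Omega}\min_{i=1,2}\kappa_i(x,y) \ \le\ \frac{\langle\bc{A}u,u\rangle}{\langle\bc{L}u,u\rangle} \ \le\ \sup_{(x,y)\in\Omega}\max_{i=1,2}\kappa_i(x,y),
\end{equation}
uniformly in $u\in H_0^1(\Omega)$.

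Finally, the continuity of $K$ on the closure $\overline{\Omega}$ combined with the closed-form expressions \cref{eq:roots} shows that $\kappa_1$ and $\kappa_2$ are continuous on the compact set $\overline{\Omega}$; hence the infimum and supremum are attained, and the two bounds above coincide with the endpoints of $\mathrm{Conv}(\kappa_1(\overline{\Omega})\cup\kappa_2(\overline{\Omega}))$ as described in \cref{eq:conv:hull}. No sign or definiteness assumption on $K$ is used. The only mildly delicate point, and the sole genuine difference compared to the diagonal case treated in \cref{th:inclusion}, is the pointwise diagonalization under the integral, which works transparently because $Q(x,y)$ is orthogonal at every point and drops out when computing $\|\nabla u\|^2$; everything else is a routine repetition of the diagonal argument.
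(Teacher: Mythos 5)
Your proposal is correct and follows essentially the same route as the paper: both reduce to bounding the Rayleigh quotient $\langle\bc{A}u,u\rangle/\langle\bc{L}u,u\rangle$ via the pointwise identity $K\nabla u\cdot\nabla u=\Lambda Q^T\nabla u\cdot Q^T\nabla u$ and the orthogonality of $Q$, which gives $\|Q^T\nabla u\|=\|\nabla u\|$. The only difference is presentational: you spell out the intermediate vector $\mathbf{p}=Q^T\nabla u$ and the attainment of the extrema via \cref{eq:roots}, details the paper leaves implicit.
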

\begin{proof}
The proof is analogous to the proof of \cref{th:inclusion}
with the argument used in the derivation of \cref{D2} now written in the form
\[K\nabla u\cdot\nabla u = \Lambda Q^T \nabla u \cdot Q^T\nabla u 
\leq \sup_{\alg{w} \in \Omega}\max_{i=1,2} \{\kappa_i(\alg{w})\} \|Q^T\nabla u\|^2.\]
Due to the orthogonality of $Q$ we get
\begin{align*}
\int_{\Omega} K \nabla u \cdot \nabla u  &\leq \sup_{\alg{w} \in \Omega} \max_{i=1,2}\{\kappa_i(\alg{w})\}\int_{\Omega} \|\nabla u\|^2,
\end{align*}
and, similarly,
\[\inf_{\alg{w} \in \Omega}  \max_{i=1,2} \{\kappa_i(\alg{w})\}\int_{\Omega} \|\nabla u\|^2 \leq \int_{\Omega} K \nabla u \cdot \nabla u.\]
\end{proof}

\noindent
The proof of \cref{th:theorem} is completed by combination of  \cref{th:function:values:ii,th:constant:ii,th:interval-theorem:ii,th:inclusion:ii}; see the proof of \cref{th:theorem:diag}.


\section{Neumann boundary conditions}\label{sec:neumann}
\Cref{th:theorem} also holds for generalized eigenvalue problems with homogeneous Neumann boundary conditions:  
\begin{equation}\label{eq:neum:problem}
\begin{split}
\nabla \cdot (K \nabla u) &= \lambda \Delta u \quad \mbox{ for } (x,y) \in \Omega, \\
\nabla u \cdot \alg{n} &= 0 \ \, \quad\quad\mbox{ for } (x,y) \in\partial\Omega,
\end{split}
\end{equation}
where $\alg{n}$ denotes the outwards pointing unit normal vector of $\partial \Omega$.
Instead of $H_0^1(\Omega)$, we now employ the space 
\begin{equation}\nonumber
V = \left\{ v \in H^1(\Omega)| \; \int_{\Omega} v  =0 \right\},
\end{equation}
with the  operators $\bc{L}$ and $\bc{A}$ defined analogously as above (see \cref{eq:L} and \cref{eq:A}) 
\begin{align*}
& \langle \mathcal{L} \phi, \psi \rangle = \int_{\Omega} \nabla \phi  \cdot  \nabla \psi,  \quad \phi,  \psi \in V, \\
& \langle \mathcal{A} \phi, \psi \rangle = \int_{\Omega} K \nabla \phi  \cdot  \nabla \psi, \quad \phi,  \psi \in V, 
\end{align*}
where $\bc{L}$ has a bounded inverse operator; see, e.g., \cite[Example 7.2.2, page 117]{BMar86}. 
For the Neumann problem, the functions $v_r$, defined in \cref{B0}, and the  solutions $\phi$  of the wave equation, defined in \cref{B5.01}, must be modified to
\[v_r - \int_{\Omega} v_r  \quad\mbox{ and }\quad  
\phi - \int_{\Sigma_l} \phi,
\]
respectively. 
The rest will follow in an analogous way to the analysis presented in this paper.

\section{Numerical experiments}\label{sec:exp}
In this section our theoretical results will be illuminated by numerical experiments where the matrices are constructed using FEniCS and the eigenvalues are computed and visualized with Matlab.\footnote{FEniCS version 2017.2.0 \cite{2015Fenics} and MATLAB Version: 9.5.0 (R2018b).} If not specified otherwise, we consider the domain $\Omega \equiv (0,1) \times (0,1)$ and a uniform triangular mesh with piecewise linear discretization basis functions is used. 
\label{experiments}

The examples in \cref{introduction} concerns  diagonal positive definite tensors. We first complement this by performing experiments with nondiagonal indefinite tensors. We consider three test problems in the form \cref{eq:eigen} with zero Dirichlet boundary conditions and with the following entries in the symmetric tensor \cref{eq:symm:tensor}:
\begin{equation*}\label{eq:exp:full}
\begin{aligned}
(P4):\quad k_1(x,y)  &= 5,\  k_2(x,y) =-5,\ k_3(x,y) = 0,\\
(P5):\quad k_1(x,y)  &= 3 ,\ k_2(x,y) = -3,\ k_3(x,y) = 4,\\
(P6):\quad k_1(x,y)  &= 3e^{-3(|x-0.5|+|y-0.5|)} ,\ k_2(x,y) = -k_1,\ k_3(x,y) = 4\cos(\tfrac{\pi(x+y-1)}{2}).
\end{aligned}
\end{equation*}
\begin{figure}[ht]
\centering
\includegraphics[width = 0.48\linewidth]{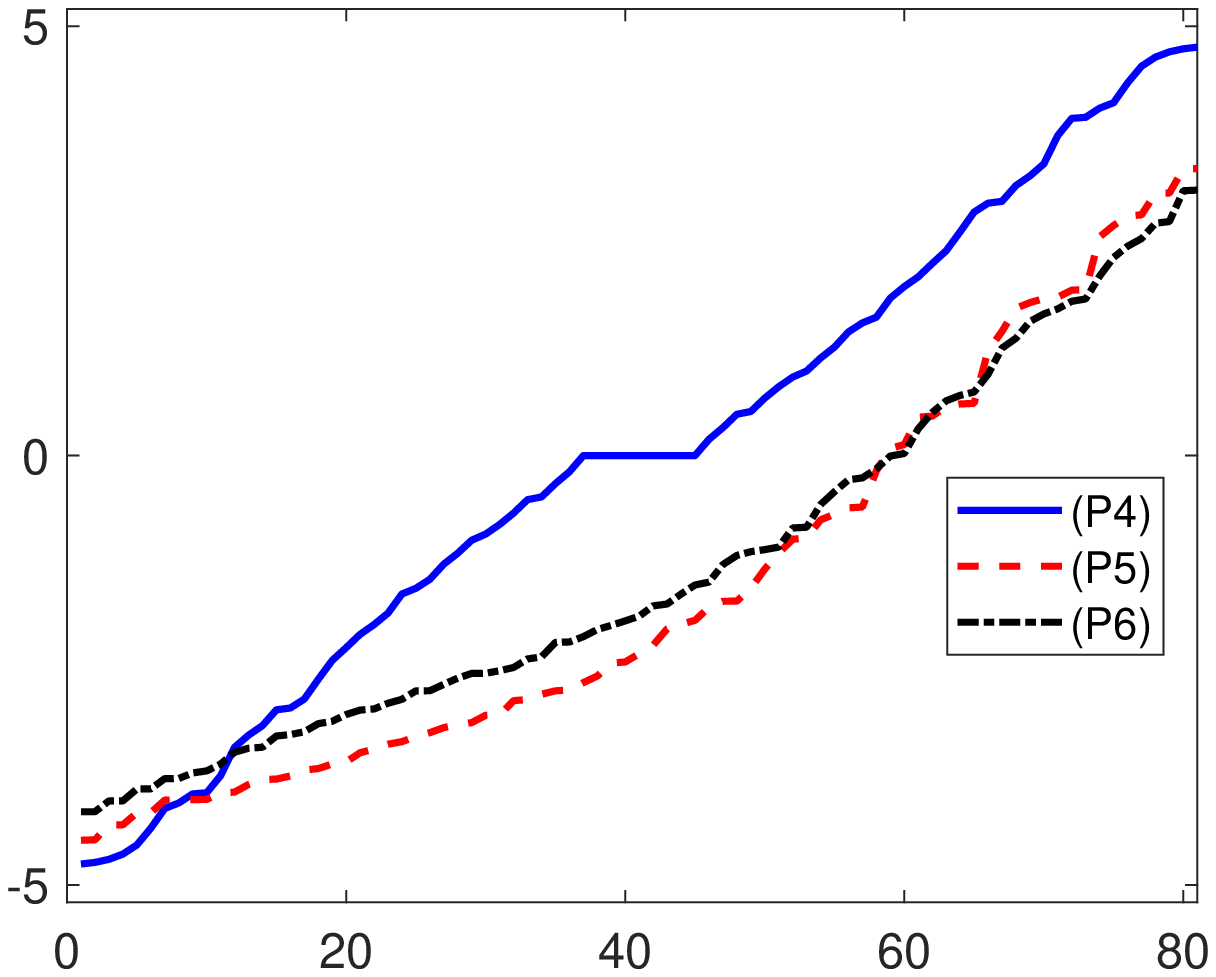}
\hspace{1em}
\includegraphics[width = 0.48\linewidth]{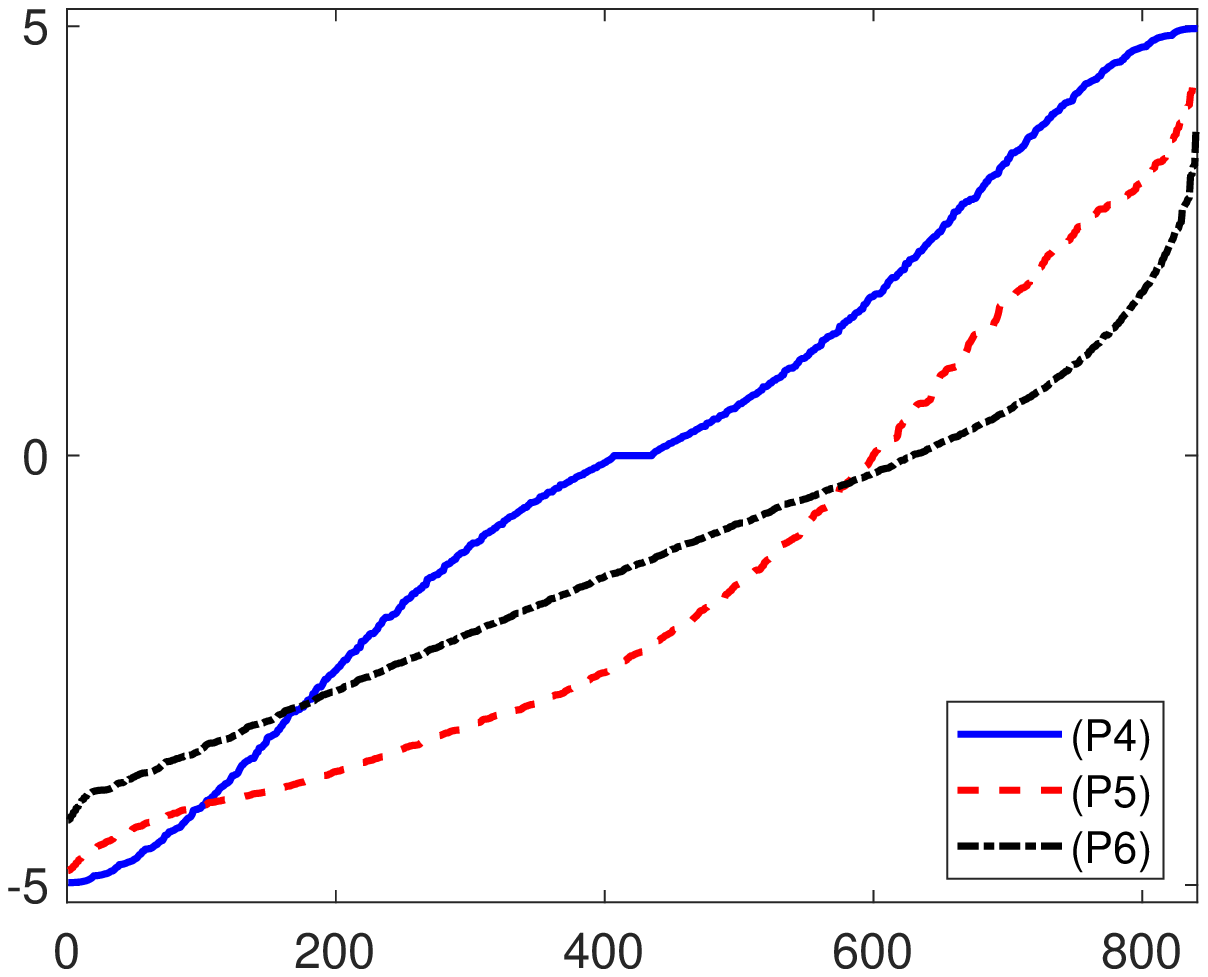}%
\caption{Spectra of the discretized test problems (P4), (P5) and (P6) for $N=81$ (left) and $N=841$ (right) degrees of freedom. Horizontal axis: the indices of the increasingly ordered eigenvalues. Vertical axis: the size of the eigenvalues.}
\label{fig:exp:full}
\end{figure}
Using  \cref{eq:roots} gives for the problems (P4) and (P5) that $\kappa_1(x,y) = -5$ and $\kappa_2(x,y) = 5$. Furthermore, for problem (P6) formula  \cref{eq:roots} yields  
\[\kappa_{1,2}(x,y)  = \pm\sqrt{k_1^2+k_3^2} = \pm\sqrt{9e^{-6(|x-0.5|+|y-0.5|)}+16\cos^2(\tfrac{\pi(x+y-1)}{2})},\] 
such that $\kappa_1(\overline{\Omega}) = - \kappa_2(\overline{\Omega}) = [3e^{-3},5]$. 
As in \cref{fig:intro}, the spectra visualized in \cref{fig:exp:full}  spread over the entire interval \cref{eq:conv:hull} defined by the nonoverlapping ranges $\kappa_1(\overline{\Omega})$ and $\kappa_2(\overline{\Omega})$. Refining the mesh gives better approximations of the endpoints of the interval $[-5,5]$. 
The fact that the tensor \cref{eq:symm:tensor} is not diagonal has no qualitative effect on the observed experimental data. We will therefore below only consider diagonal tensors.

The left part of \cref{fig:exp:nem} shows numerical results computed with homogeneous Neumann boundary conditions (see \cref{sec:neumann}). The results with zero Dirichlet boundary conditions are, for comparison, presented in the right part of \cref{fig:exp:nem}. We consider two test problems with the diagonal tensor \cref{eq:diag:tensor} defined by
\begin{equation}\label{eq:exp:nem}
\begin{aligned}
(P7):\quad \kappa_1(x,y)  &= 10-f(x,y),\quad \kappa_2(x,y) = 4+f(x,y),\\
(P8):\quad \kappa_1(x,y)  &= 8+f(x,y),\quad \kappa_2(x,y) = 6-f(x,y),
\end{aligned}
\end{equation}
where
\[f(x,y) = 4((x-0.5)^2+(y-0.5)^2)\]
is chosen such that, for both problems, $\kappa_1(\overline{\Omega}) = [8,10]$ and $\kappa_2(\overline{\Omega}) = [4,6]$. Note that these intervals do not overlap.
The minimum (respectively maximum) of the interval $[4,10]$ is obtained by the function $\kappa_1(x,y)$ (respectively $\kappa_2(x,y)$) in the interior of the solution domain for problem (P7), while for problem (P8) the endpoints of this interval are attained on the boundary $\partial \Omega$. In the latter case the endpoints of the interval $[4,10]$ are better approximated for the problem  with Neumamn boundary conditions. Similar behavior was also observed for other test cases.
\begin{figure}[ht]
\centering
\includegraphics[width = 0.49\linewidth]{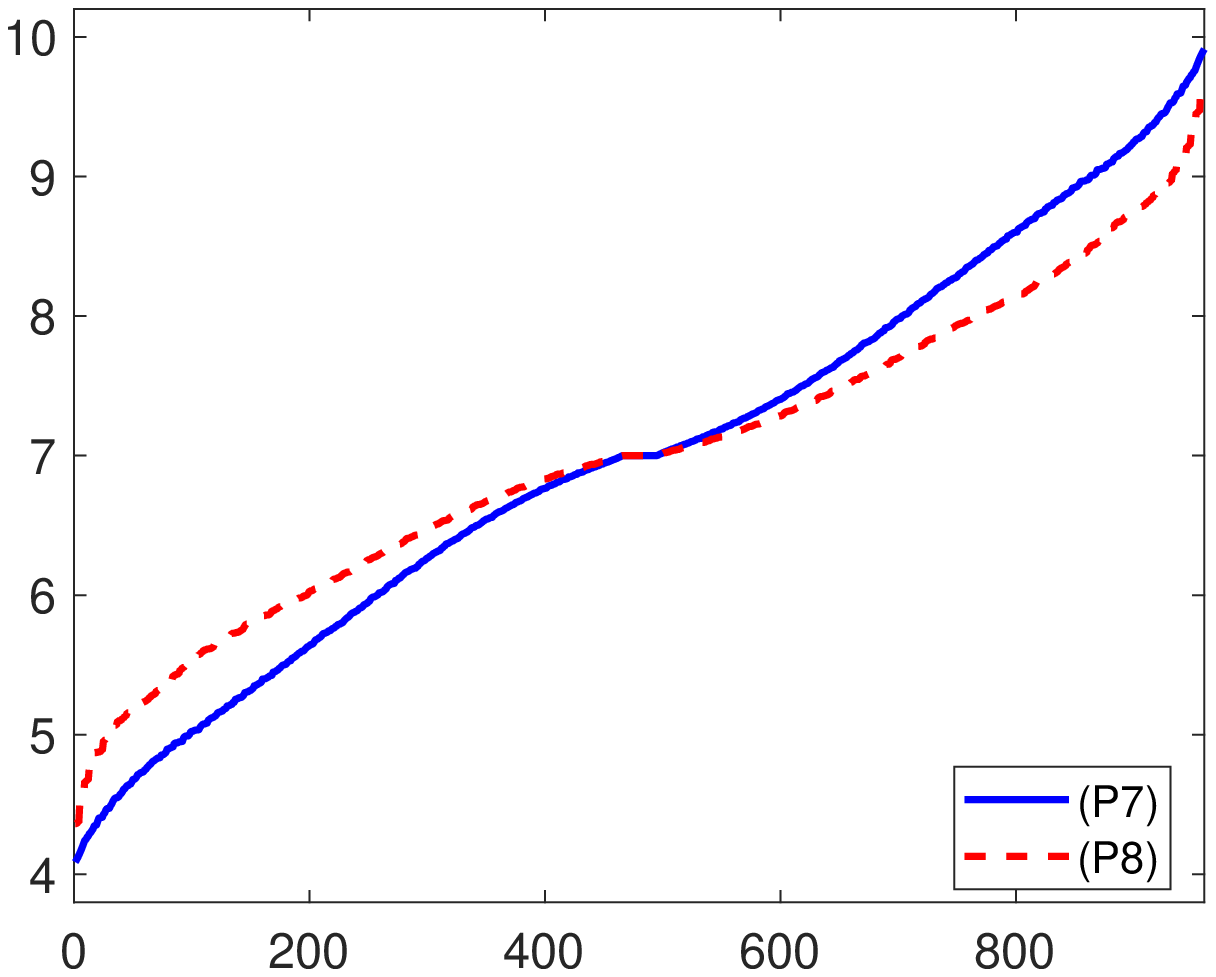}%
\hspace{1mm}
\includegraphics[width = 0.49\linewidth]{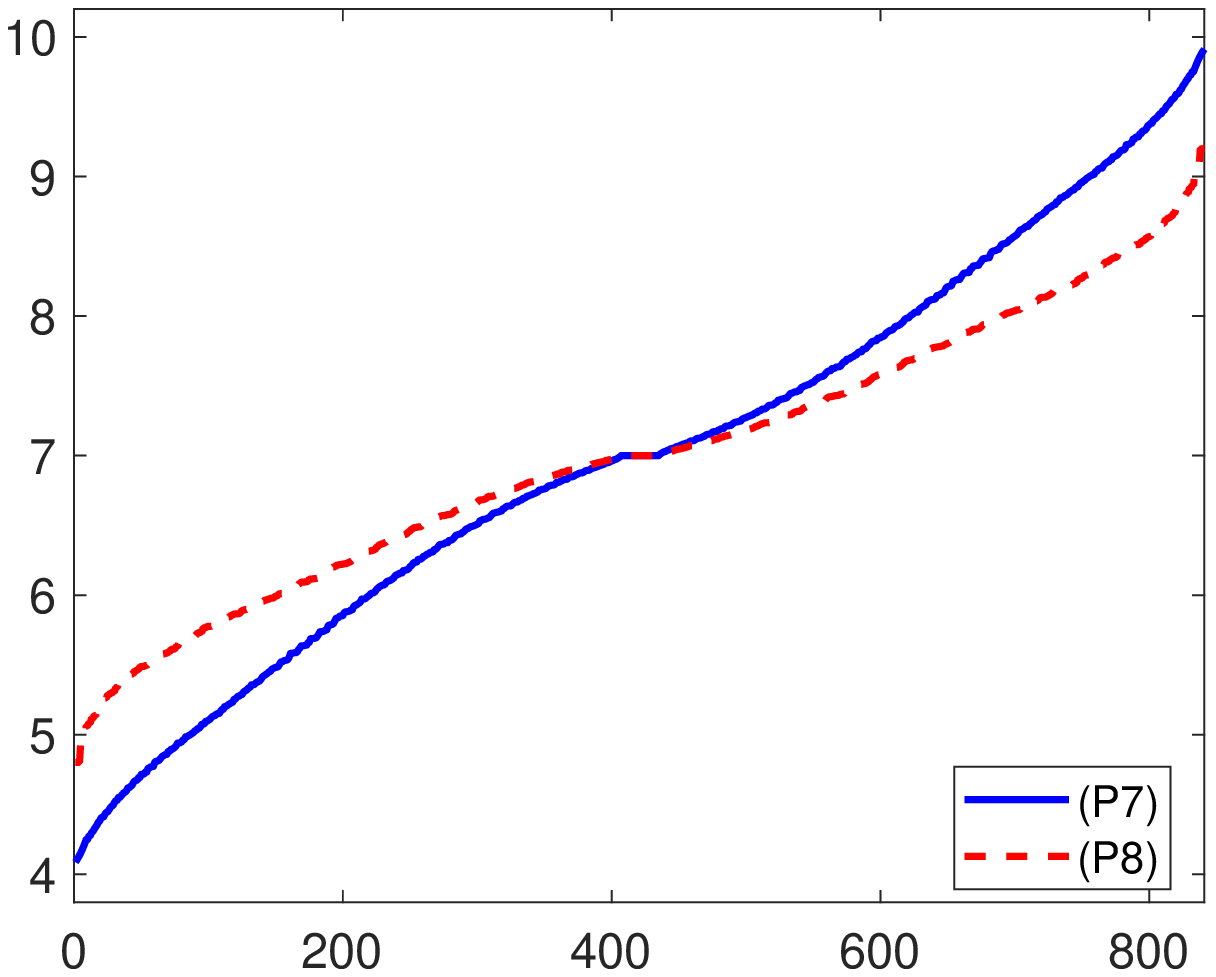}%
\caption{
Spectra of the discretized test problems (P7) and (P8) with zero Neumann boundary conditions (left) and zero Dirichlet  boundary conditions (right).}
\label{fig:exp:nem}
\end{figure}

Numerical results for nonconvex domains are presented in \cref{fig:exp:domain}.
We used the diagonal tensor  \cref{eq:diag:tensor} with
\begin{equation}\nonumber
\begin{aligned}(P9):\quad
\kappa_1(x,y)  &= 6-3e^{-3(|x-0.8|+|y-0.8|)},\quad \kappa_2(x,y) = 6+3e^{-3(|x-0.2|+|y-0.2|)}, 
\end{aligned}
\end{equation}
and the L-shaped domains
$\Omega_1  = (0,1)^2 \setminus (0,0.6)^2$ and $\Omega_2  =  (0,1)^2 \setminus (0.4,1)^2$;
see the illustration in the left part of \cref{fig:exp:domain}. We employed zero Dirichlet boundary conditions. The function $\kappa_1(x,y)$ (respectively $\kappa_2(x,y)$) has its minimum (respectively maximum) at the point $[0.8, 0.8]$ (respectively $[0.2, 0.2]$), which is outside the domain $\Omega_2$ (respectively $\Omega_1$).
As a result, we observe in \cref{fig:exp:domain} that the spectra of the disretized problems differ, depending on the ranges of functions $\kappa_1(x,y)$ and $\kappa_2(x,y)$ over   $\overline{\Omega}_1$ and $\overline{\Omega}_2$.
\begin{figure}[ht]
\centering
\begin{tikzpicture}[scale=1.3]
\path [draw=black] (0,6) -- (0,5.2) -- (1.2,5.2) -- (1.2,4) -- (2,4) -- (2,6) -- cycle;
\path [draw=black] (0,1.7) -- (2,1.7) -- (2,2.5) -- (0.8,2.5) -- (0.8,3.7) -- (0,3.7) -- cycle;
\node  at (1,5.6) {$\Omega_1$};
\node  at (1,2.1) {$\Omega_2$};
\node [color=white] at (1,1) {.};
\end{tikzpicture}
\hspace{1em}
\includegraphics[width = 0.7\linewidth]{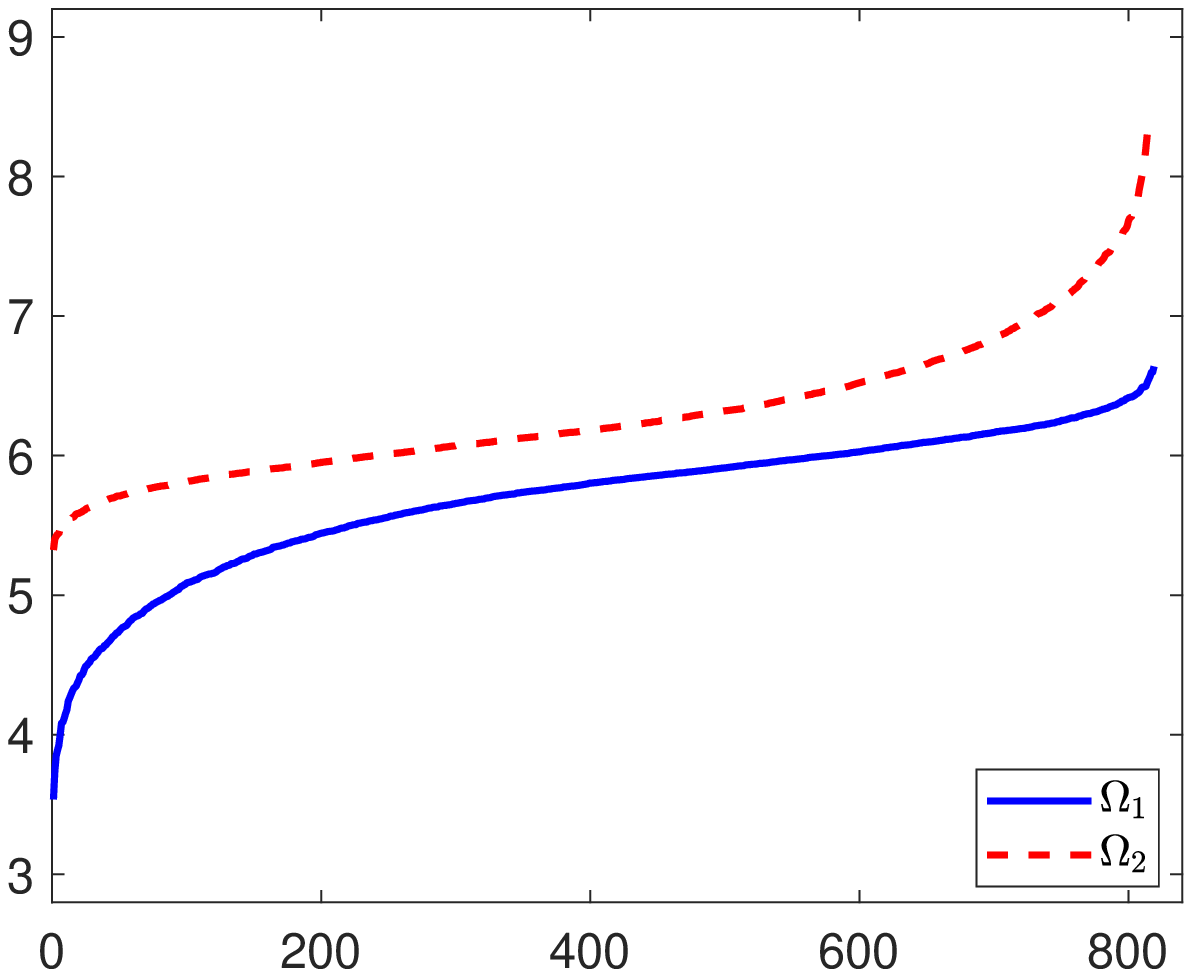}
\caption{Left: Illustration of the shapes of the domains $\Omega_1$ and $\Omega_2$. Right: Spectra of the test problem (P9) associated with the  domains $\Omega_1$ and $\Omega_2$.
The ranges satisfy $\kappa_1(\overline{\Omega}_1) \subset [3,6]$ and $\kappa_2(\overline{\Omega}_1) \subset [6,7]$ for the domain $\Omega_1$ and $\kappa_1(\overline{\Omega}_2) \subset [5,6]$   and $\kappa_2(\overline{\Omega}_2) \subset [6,9]$ for the domain $\Omega_2$.}
\label{fig:exp:domain}
\end{figure}

Finally, we present in \cref{fig:exp:3d} numerical results for 3D problems, which is not (yet) supported by rigorous proofs.
We consider the unit cube $\Omega \equiv (0,1)^3$, zero Dirichlet boundary conditions and the  diagonal tensor 
$K(x,y,z)  =  \mbox{diag}(\kappa_1, \kappa_2, \kappa_3)$ defined  as
\begin{equation*}\label{eq:exp:3d}
\begin{aligned}
(P10):&\ \kappa_1 = 1,\quad\kappa_2 = 5.5,\quad \kappa_3 = 10,\\
(P11):&\ 
\kappa_1 = 1+\sin^2(x+y+z), \
\kappa_2 = 5.5+\cos(\pi xyz), \
\kappa_3 = 10-\cos^2(x+y+z),
\\
(P12):&\ 
\kappa_1 = 1+(x+y+z-1)^2,\
\kappa_2 = 4+xy+z,\
\kappa_3 = 10 - 2(x+y+z-1)^2.
\end{aligned}
\end{equation*}
This choice of test problems follows the same `pattern' as for the introductory experiments presented in \cref{introduction}: 
The ranges of the functions $\kappa_i(x,y,z)$, $i =1,2,3$, are for (P10)  isolated points, they form nonoverlapping intervals for  (P11) and overlapping intervals for (P12). As for the 2D test cases, we observe that the spectra of the discretized problems are spread over the entire interval $[1,10]$, irrespectively of whether the associated ranges overlap or not.
\begin{figure}[ht]
\centering
\includegraphics[width = 0.7\linewidth]{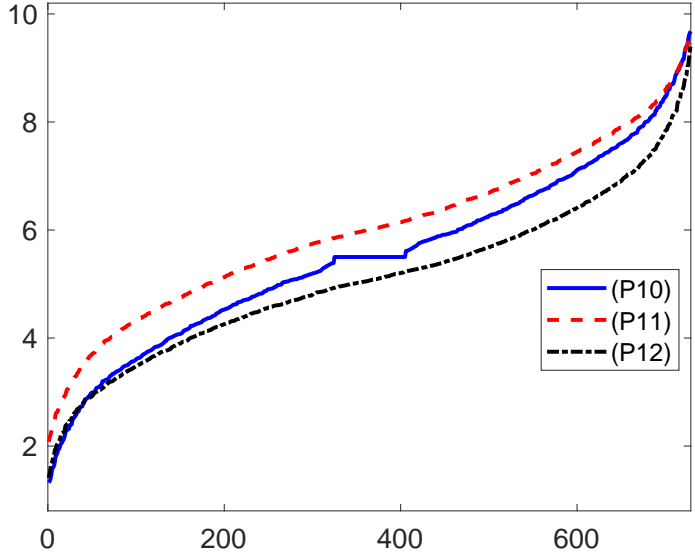}%
\caption{The spectra of the 3D test problems (P10)--(P12) spread over the entire interval $[1,10]$, while the ranges of the function entries of the diagonal tensors are as follows: Isolated points $\kappa_1(\overline{\Omega}) = 1,\
		   \kappa_2(\overline{\Omega}) = 5.5$ and
		 $\kappa_3(\overline{\Omega}) = 10$ for (P10),
nonoverlapping intervals $\kappa_1(\overline{\Omega}) = [1,2],\
			\kappa_2(\overline{\Omega}) = [4.5,6.5]$ and
		  $\kappa_3(\overline{\Omega}) = [9,10]$ for (P11) and
overlapping intervals $\kappa_1(\overline{\Omega}) = [1,5],\
			\kappa_2(\overline{\Omega}) = [4,6]$ and
		  $\kappa_3(\overline{\Omega}) = [2,10]$ for (P12).}
\label{fig:exp:3d}
\end{figure}

\section{Open problems} 
\label{sec:remarks}
In this paper we have rigorously analyzed $2$D problems, and it is an open question whether our main result \cref{th:theorem} also holds in $3$D, or even higher dimensions. Our numerical results indicate that such a generalization is possible, but, e.g., the task of construction functions similar to the $\{ v_r \}$ functions \cref{B0} will become more involved.

Another important issue is to `translate' our findings to discretized operators. This was accomplished in \cite{Ger19} for uniformly elliptic operators with scalar coefficient functions. That is,  \cite{Ger19} contains discrete versions of the results published 
in \cite{Nie09} and further develops towards approximating locally the individual eigenvalues. The techniques employed in \cite{Ger19} can be generalized to analyze discretized second order differential operators with indefinite tensors. Such a development is, however, out of the scope of this paper. An interesting question concerns the distribution of the eigenvalues: 
For discretized operators, are the eigenvalues evenly distributed in the interval \eqref{eq:conv:hull}? Our numerical experiments suggest that the answer may be positive. We will return to this question elsewhere.

\appendix
\section{Technical details about the inequalities \cref{B1} in the proof of \cref{th:function:values}}
\label{sec:apA}
We want to prove that, for sufficiently small $r>0$, 
\begin{align}\label{A:B1}
4-4r\leq \| (v_r)_x \|_{L^2(\Omega)}^2 &\leq 4, \\
\| (v_r)_y \|_{L^2(\Omega)}^2 &\leq 4r,
\label{A:B2}
\end{align}
where $v_r(x,y)$ is defined on $R_r$ by \cref{eq:domain:r,B0}.
Without  loss of generality, we consider the case $(x_0,y_0)=(0,0)$. Then $R_r = [-r^2,r^2]\times[-r,r]$ and
\begin{equation}\nonumber
v_r(x,y) = \sqrt{r}\min\left\{1-\tfrac{|x|}{r^2},\tfrac{1}{r}-\tfrac{|y|}{r^2}\right\} \quad
	\mbox{for }(x,y) \in R_r,
\end{equation}
\begin{figure}[ht]
\centering
\includegraphics[width=0.8\linewidth]{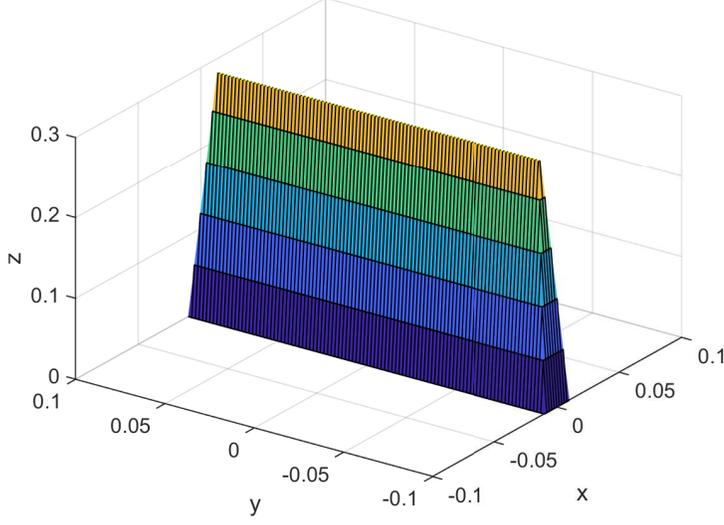}
\caption{\label{fig3} The function $v_r$ centered at the point $(0,0)$ with $r=0.1$.}
\end{figure}%
with $v_r(x,y)=0$ elsewhere; see \cref{fig3}.

For any $0<r<1$, the partial derivatives of  $v_r(x,y)$ are not defined at the boundary $\partial R_r$ of $R_r$, at the  set of points $\{(x,y) \in R_r: |y|-|x|=r-r^2\}$, and at the set of points $\{(x,y) :  x=0,\, |y|<r-r^2\}$ where $v_r(x,y)$ reaches its maximum; see the edges of $\{v_r(R_r)\}$ in \cref{fig3}. Simple computations yield that  within $R_r$
\begin{align*}
\left|\partial_x v_r(x,y)\right|^2 &= 0, \quad
\left|\partial_y v_r(x,y)\right|^2 = \tfrac{1}{r^3}, \quad
\mbox{for}\quad  |y|-|x|>r-r^2, \ (x,y) \notin \partial R_r,		\\
\left|\partial_x v_r(x,y)\right|^2 &= \tfrac{1}{r^3}, \quad
\left|\partial_y v_r(x,y)\right|^2 = 0, \quad
\mbox{for}\quad  x\neq0, \, |y|-|x|<r-r^2,\ (x,y) \notin \partial R_r.
\end{align*}
The upper bound in \cref{A:B1} thus holds because 
\begin{equation}
\label{eq:A2}
\|(v_r)_x \|_{L^2(\Omega)}^2 = \int_{R_r}|\partial_x v_r(x,y)|^2 
							   \leq\int_{R_r}\tfrac{1}{r^3}
							   = \tfrac{2r^2\cdot 2r}{r^3} = 4.
\end{equation}
Moreover, denoting 
\[
P_r = \{(x,y): x\neq 0,\, |x|<r^2,\, |y|<r-r^2\},
\]
we have
\[
\left|\partial_x v_r(x,y)\right|^2 = \tfrac{1}{r^3}, \quad
\left|\partial_y v_r(x,y)\right|^2 = 0, \quad
\mbox{for}\quad  (x,y)\in P_r.
\]
Thus $ \| (v_r)_x \|_{L^2(\Omega)}^2$ and $ \| (v_r)_y \|_{L^2(\Omega)}^2$ can be bounded as follows 
\begin{align*}
\| (v_r)_x \|_{L^2(\Omega)}^2 &=\int_{R_r}|\partial_x v_r(x,y)|^2 
								\geq \int_{P_r}|\partial_x v_r(x,y)|^2 
							   =\int_{P_r}\tfrac{1}{r^3}= \tfrac{2r^2\cdot 2(r-r^2)}{r^3} = 4-4r, \\
\| (v_r)_y \|_{L^2(\Omega)}^2 &=\int_{R_r}|\partial_y v_r(x,y)|^2 
							= \int_{R_r\setminus P_r}|\partial_y v_r(x,y)|^2 
							   \leq\int_{R_r\setminus P_r}\tfrac{1}{r^3} = \tfrac{2r^2\cdot 2r^2}{r^3} = 4r							   
\end{align*}
which completes the proof.

\section{Technical details about the bound \cref{eq:neumann:con} in the proof of \cref{th:interval-theorem}}%
\label{BFnew2}
Assume that $\bc{L}^{-1}\bc{A} - \lambda\bc{I}$ has a bounded inverse. We will show that, for sufficiently small $l>0$,
\begin{equation}\label{eq:app:2}
\|(\bc{L}^{-1}\bc{A} - \lambda\bc{I})^{-1}\bc{L}^{-1}(\bc{A}_l - \bc{A})\|_{\bc{L}} 
\leq \|(\bc{L}^{-1}\bc{A} - \lambda\bc{I})^{-1}\|_{\bc{L}}\|\bc{L}^{-1}(\bc{A}_l - \bc{A})\|_{\bc{L}}
< 1.
\end{equation}
The operator norm
\begin{equation}
\|\bc{L}^{-1}(\bc{A}_l - \bc{A})\|_{\bc{L}} 
	 \equiv \sup_{u\in H^1_0(\Omega)}
\frac{\|\bc{L}^{-1}(\bc{A}_l - \bc{A})u\|_{\bc{L}}} 
	 {\|u\|_{\bc{L}}} \nonumber
\end{equation}
can be expressed as (see, e.g. \cite[Theorem 4.1--3]{2013Ciarlet_Book})	
\begin{equation}
\|\bc{L}^{-1}(\bc{A}_l - \bc{A})\|_{\bc{L}} 
	 = \sup_{u,v\in H^1_0(\Omega)}
\frac{\left|\left(\bc{L}^{-1}(\bc{A}_l - \bc{A})u,v\right)_{\bc{L}}\right|}
	 {\|u\|_{\bc{L}}\|v\|_{\bc{L}}}. \label{eq:A:norm}
\end{equation}
Using
\begin{align*}
|\left(\bc{L}^{-1}(\bc{A}_l - \bc{A})u,v\right)_{\bc{L}}|
	&= 		\left|\left\langle(\bc{A}_l - \bc{A})u,v\right\rangle\right|					\\
	&= 		|\int_{S_l} (K(x_0,y_0)-K(x,y)) \nabla u \cdot  \nabla v|				\\
	&\leq 	\int_{S_l} \|K(x_0,y_0)-K(x,y)\| |\nabla u| \cdot |\nabla v|	\\
	&\leq 	\sup_{(x,y) \in S_l}\|K(x_0,y_0)-K(x,y)\|\, \|u\|_{\bc{L}}\|v\|_{\bc{L}},
\end{align*}
we get the bound
\begin{equation}
\nonumber
\|\bc{L}^{-1}(\bc{A}_l - \bc{A})\|_{\bc{L}} 
	 \leq \sup_{(x,y) \in S_l}\|K(x_0,y_0)-K(x,y)\|.
\end{equation}
Since $\|(\bc{L}^{-1}\bc{A} - \lambda\bc{I})^{-1}\|_{\bc{L}}$ is bounded, the continuity of $K(x,y)$ at the point $(x_0,y_0)$ ensures that $l$ can be chosen  such that \cref{eq:app:2} holds.

\bibliographystyle{siamplain}

\begin{thebibliography}{10}

\bibitem{2015Fenics}
{\sc M.~S. Aln{\ae}s, J.~Blechta, J.~Hake, A.~Johansson, B.~Kehlet, A.~Logg,
  C.~Richardson, J.~Ring, M.~E. Rognes, and G.~N. Wells}, {\em The fenics
  project version 1.5}, Archive of Numerical Software, 3 (2015),
  \url{https://doi.org/10.11588/ans.2015.100.20553}.

\bibitem{2013Ciarlet_Book}
{\sc P.~G. Ciarlet}, {\em Linear and nonlinear functional analysis with
  applications}, Society for Industrial and Applied Mathematics, Philadelphia,
  PA, 2013.

\bibitem{1982Friedman_Book}
{\sc A.~F. Friedman}, {\em Foundations of Modern Analysis}, Dover Publications
  Inc., 1982.

\bibitem{Ger19}
{\sc T.~Gergelits, K.~A. Mardal, B.~F. Nielsen, and Z.~Strako{\v{s}}}, {\em
  Laplacian preconditioning of elliptic {PDE}s: Localization of the eigenvalues
  of the discretized operator}, SIAM Journal on Numerical Analysis, 57 (2019),
  pp.~1369--1394.

\bibitem{Gre89}
{\sc A.~Greenbaum}, {\em Behavior of slightly perturbed {L}anczos and
  conjugate-gradient recurrences}, Linear Algebra Appl., 113 (1989), pp.~7--63,
  \url{https://doi.org/10.1016/0024-3795(89)90285-1}.

\bibitem{Hof17}
{\sc C.~Hofreither and S.~Takacs}, {\em Robust multigrid for isogeometric
  analysis based on stable splittings of spline spaces}, SIAM Journal on
  Numerical Analysis, 55 (2017), pp.~2004--2024.

\bibitem{Hof15}
{\sc C.~Hofreither, S.~Takacs, and W.~Zulehner}, {\em A robust multigrid method
  for isogeometric analysis in two dimensions using boundary correction},
  Computer Methods in Applied Mechanics and Engineering, 316 (2017),
  pp.~22--42.

\bibitem{LSB13}
{\sc J.~Liesen and Z.~Strako{\v{s}}}, {\em Krylov subspace methods: principles
  and analysis}, Numerical Mathematics and Scientific Computation, Oxford
  University Press, Oxford, 2012.

\bibitem{BMar86}
{\sc J.~T. Marti}, {\em Introduction to Sobolev Spaces and finite element
  solution of elliptic boundary value problems}, Academic Press, 1986.

\bibitem{Nie09}
{\sc B.~F. Nielsen, A.~Tveito, and W.~Hackbusch}, {\em Preconditioning by
  inverting the {L}aplacian; an analysis of the eigenvalues}, IMA Journal of
  Numerical Analysis, 29 (2009), pp.~24--42.

\bibitem{BRen93}
{\sc M.~Renardy and R.~C. Rogers}, {\em An Introduction to Partial Differential
  Equations}, Springer-Verlag, 1993.

\bibitem{San16}
{\sc G.~Sangalli and M.~Tani}, {\em Isogeometric preconditioners based on fast
  solvers for the {S}ylvester equation}, SIAM Journal on Scientific Computing,
  38 (2016), pp.~A3644--A3671.

\end{thebibliography}

\end{document}